\newcommand{\onlylong}[1]{\ifthenelse{\boolean{longversion}}{#1}{}}
\newcommand{\onlyshort}[1]{\ifthenelse{\boolean{longversion}}{}{#1}}
\title{Randomness versus Superspeedability}
\author{Rupert H\"olzl}{Universit\"at der Bundeswehr M\"unchen, Germany \and \url{https://hoelzl.fr}}{r@hoelzl.fr}{}{}
\author{Philip Janicki}{Universit\"at der Bundeswehr M\"unchen, Germany \and \url{}}{philip.janicki@unibw.de}{https://orcid.org/0009-0008-9063-028X}{}
\author{Wolfgang Merkle}{Universit\"at Heidelberg, Germany \and \url{https://web.ifi.uni-heidelberg.de/tcs/merkle}}{merkle@math.uni-heidelberg.de}{}{}
\author{Frank Stephan}{National University of Singapore, Republic of Singapore \and \url{https://www.comp.nus.edu.sg/~fstephan/}}{fstephan@comp.nus.edu.sg}{}{}
\authorrunning{R. H\"olzl, P. Janicki, W. Merkle and F. Stephan} 
\keywords{superspeedable numbers, speedable numbers, regainingly approximable numbers, regular numbers, left-computable numbers}
\def\ut{{\upharpoonright}}
\def\IN{{\mathbb N}}
\def\IQ{{\mathbb Q}}
\def\IR{{\mathbb R}}
\def\SigmaS{\Sigma^{\ast}} %Sigma^*
\theoremstyle{claimstyle}
\begin{document}

\maketitle

\begin{abstract}
Speedable numbers are real numbers which are algorithmically approximable from below and whose approximations can be accelerated nonuniformly. We begin this article by answering a question of Barmpalias by separating a strict subclass that we will refer to as {\em superspeedable} from the speedable numbers; for elements of this subclass, acceleration is possible  uniformly  and to an even higher degree. This new type of benign left-approximation of numbers then integrates itself into a hierarchy of other such notions studied in a growing body of recent work. We add a new perspective to this study by juxtaposing this hierachy with the well-studied hierachy of algorithmic randomness notions.
\end{abstract}

\section{Introduction}

In theoretical computer science, the concepts of randomness and of computation speed, as well as their interactions with each other, play an important role. For example, one can study whether access to sources of random information can enable or simplify the computation of certain mathematical objects, for example by speeding up computations of difficult problems. This is a recurring paradigm of theoretical computer science, and there is more than one way of formalizing it, but the perhaps most well-known instance is the difficult open question in the field of complexity theory of whether $\mathsf{P}$ is equal to $\mathsf{BPP}$.

A recent line of research could be seen as investigating the reverse direction: to what degree can objects that are random be quickly approximated? Before answering this question, we need to define what we understand by random. The answer is provided by the research field of information theory that deals with questions of compressibility of information, where randomly generated information will be difficult to compress, due to its lack of internal regularities. The subfield of algorithmic randomness, which set out to unify these and other topics and to understand what actually makes a mathematical object random, has produced a wealth of insights into questions about computational properties that random objects must, may, or may not possess. The general pattern in this area is that we put at our disposal algorithmic tools to detect regularities in sequences. Those sequences that are ``random enough'' to resist these detection mechanisms are then considered random. Many different ``resistance levels'' have been identified that give rise to randomness notions of different strengths.

For completeness, we define the two central randomness notions featuring in this article,  introduced by Martin-L\"of~\cite{MARTINLOF1966602} and Schnorr~\cite{Sch71} respectively: A {\em randomness test} is a sequence  $(U_n)_{n\in\IN}$ of uniformly effectively open subsets of $\IR$ and we say that it {\em covers} an $x \in \IR$ if and only if $x \in \bigcap_n U_n$. 
Write $\mu$ for the Lebesgue measure on $\IR$; if $\mu(U_n) \leq 2^{-n}$ for all~$n\in \IN$, then we call $(U_n)_{n\in\IN}$ a {\em Martin-Löf test}, and if an $x \in \IR$ is not covered by any such test, then we call it {\em Martin-L\"of random}.
If we even require that $\mu(U_n) = 2^{-n}$ for all $n\in \IN$, then we call $(U_n)_{n\in\IN}$ a {\em Schnorr test}, and an $x \in \IR$ that is not covered by any such test {\em Schnorr random}. It is well-known that Schnorr randomness is a strictly weaker notion than Martin-Löf randomness; that is, there exist more Schnorr random numbers than Martin-Löf random ones. For the rest of the article, we assume that the reader is familiar with these topics; see Calude~\cite{Ca13}, Downey and Hirschfeldt~\cite{DH2010}, Li and Vit\'anyi~\cite{LV08}, or Nies~\cite{Nie09} for comprehensive overviews.

With those randomness paradigms in place, we can study the interactions between randomness and computation speeds. We will focus on a class of objects that can be approximated by an algorithm, and will be interested in the possible speeds of these approximations.
\begin{definition}
Let $x$ be a real number. For a fixed sequence $(x_n)_n$ of real numbers converging to it from below, write $\rho(n)$ for the quantity
$\frac{x_{n+1} - x_n}{x - x_n}$
for each~$n \in \IN$, and call $(\rho(n))_n$ the {\em speed quotients} of~$(x_n)_n$.
\end{definition}
\begin{definition}\label{def:speedable}
	If there exists a {\em left-approximation} of~$x$, that is, a computable increasing sequence of rational numbers converging to~$x$, then we call $x$ {\em left-computable}. If there even exists a left-approximation $(x_n)_n$ with speed quotients $(\rho(n))_n$ such that $\rho:=\limsup_{n\to\infty} \rho(n) > 0$, then we call $x$ \emph{speedable}.
\end{definition}
Left-computable numbers can be Martin-Löf random; the standard example is {\em Chaitin's $\Omega$}, the measure of the domain of an optimally universal prefix-free Turing machine. It is also known that 
there exist left-computable  Schnorr randoms that are not Martin-L\"of random.
%they can be found in every Turing degree that is high and contains a left-computable number. Left-computable sets which are not high are neither Schnorr random nor Martin-L\"of random.

The notion of speedability was introduced by Merkle and Titov. They made the interesting observation that speedability is incompatible with Martin-L\"of randomness~\cite[Theorem~10]{MT20} and asked the question whether the converse holds, that is, whether there exist left-computable numbers which are neither speedable nor Martin-L\"of random. A positive answer would have been an interesting new characterization of Martin-L\"of randomness; however, H\"olzl and Janicki~\cite[Corollary 62]{HJ2023b} showed that such a characterization does {\em not} hold. They also introduced the term {\em benign approximations} when referring to subclasses of the left-computable numbers that possess special approximations that are in some sense better behaved than general approximations from below. One example is speedability; another that will play a role in this article is the following.
\begin{definition}[Hertling, H\"olzl, Janicki~\cite{HHJ2023}]
	A real number $x$ is called \emph{regainingly approximable} if there exists a computable increasing sequence of rational numbers $(x_n)_n$ converging to $x$ with $x - x_n \leq 2^{-n}$ for infinitely many $n \in \IN$.
\end{definition}
Obviously, every regainingly approximable number is left-computable and every computable number is regainingly approximable, but none of these implications can be reversed by results of Hertling, H\"olzl, Janicki~\cite{HHJ2023}.

Merkle and Titov also showed that the property of being speedable does not depend on the choice of $\rho$, that is, if some left-computable number is speedable via some $\rho$ witnessed by some left-approximation, it is also speedable via any other $\rho' < 1$ witnessed by some other left-approximation. However, their proof is highly nonuniform, which led to Barmpalias~\cite{BarmDirect} asking whether every such number also has a {\em single} left-approximation with $\limsup_{n\to\infty} \rho(n) = 1$. 

We answer this question negatively in this article by showing that the requirement to achieve $\limsup_{n\to\infty} \rho(n) = 1$ is a strictly stronger condition that defines a strict subset of the speedable numbers which we will call superspeedable.
We then show that the regainingly approximable numbers are strictly contained in this subset. Thus within the speedable numbers there is a strict hierarchy of notions of benign approximations, namely computable implies regainingly approximable implies superspeedable implies speedable.

Wu~\cite{Wu05a} called finite sums of binary expansions of computably enumerable sets {\em regular reals}. We show that these reals are a strict subset of the superspeedable numbers as well; in particular every binary expansion of a computably enumerable set is superspeedable. 
Then we show that finite sums of left-computable numbers can only be superspeedable if at least one of their summands is superspeedable.

With this new approximation notion identified and situated in the hierarchy of benign approximability, it is then natural to wonder how it interacts with randomness. We will show that unlike the Martin-L\"of randoms, which by the result of Merkle and Titov~\cite{MT20} cannot even be speedable, the Schnorr randoms may even be superspeedable; however they cannot be regainingly approximable.
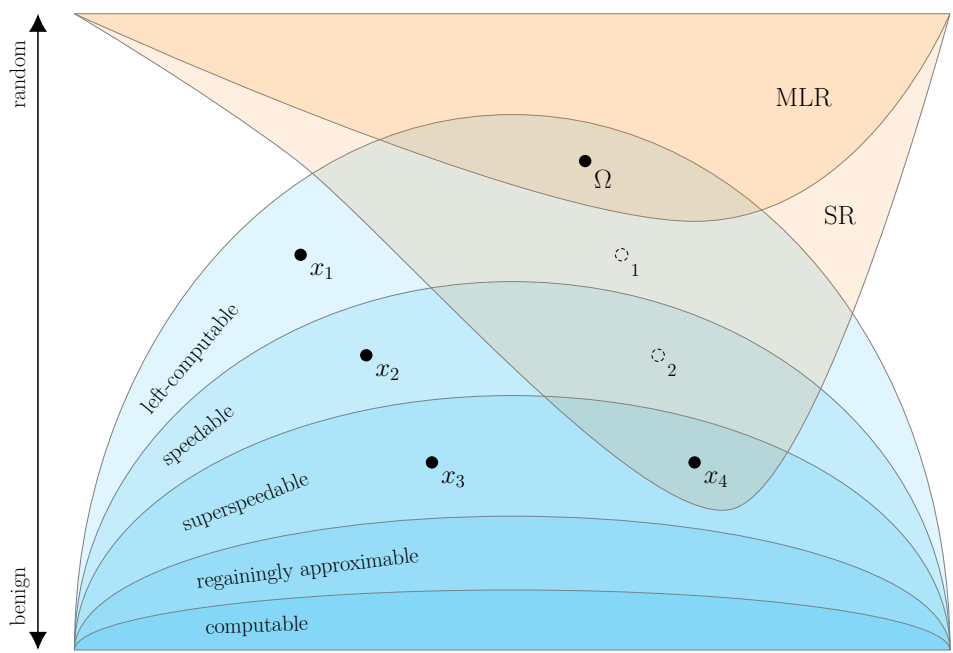
\begin{figure}[t]
	\begin{center}
		\scalebox{0.48}{
		\usetikzlibrary{arrows.meta}
		\begin{tikzpicture}[
			yscale=1.85,xscale=2,
			set/.style={fill=cyan,fill opacity=0.125, draw=gray},
			orangeset/.style={fill=orange,fill opacity=0.125, draw=gray},
			point/.style = {circle, fill=black, inner sep=0.12cm,	node contents={}},
			unclear/.style = {draw=black,thick,circle, densely dashed, inner sep=0.12cm,node contents={}}
			]
			
			\path  (-7,-0.5) rectangle (7,-10); % force symmetric bounding box

			\draw[set]  (6,-10) node (v2) {} arc(0:180:6 and 5.5);
			\draw[set]  (v2) node (v3) {} arc(0:180:6 and 3.8);
			\draw[set]  (v3) node (v4) {} arc(0:180:6 and 2);
			\draw[set]  (v4) node (v5) {} arc(0:180:6 and 0.9);
			\draw[set]  (v5) arc(0:180:6 and 8);

			\draw[gray] (-6,-0.5) node (v6) {} -- (6,-0.5) node (v7) {};
			\draw[orangeset]   plot[smooth, tension=.7] coordinates {(v6) (2.5,-3.6) (v7)} (v6);
			\draw[orangeset]   plot[smooth, tension=0.5] coordinates {(v6)  (-3,-2.65)  (3,-7.9)   (v7)} (v6);
			
			\draw[gray] (-6,-10) node (v1) {} -- (6,-10);

			\node[align=center] at (4,-1.75) {\huge $\mathrm{MLR}$};
			\node[align=center] at (4.5,-3.5) {\huge $\mathrm{SR}$};
			\node[align=center,rotate=50] at (-4.4,-5.65) {\LARGE left-computable};
			\node[align=center,rotate=38] at (-4.3,-6.8) {\LARGE speedable};
			\node[align=center,rotate=21] at (-3.65,-7.8) {\LARGE superspeedable};
			\node[align=center,rotate=7] at (-2.8,-8.8) {\LARGE regainingly approximable};
			\node[align=center,rotate=2] at (-3.5,-9.65) {\LARGE computable};

			\node at (-2.9,-4.1) [point, label={[label distance=-0cm]below right:\huge $x_1$}];
			\node at (-2,-5.6) [point, label={[label distance=-0cm]below right:\huge $x_2$}];
			\node at (-1.1,-7.2) [point, label={[label distance=-0cm]below right:\huge $x_3$}];
			\node at (2.5,-7.2) [point, label={[label distance=-0cm]below right:\huge $x_4$}];
			\node at (1,-2.7) [point, label={[label distance=-0cm]below right:\huge $\Omega$}];
			\node at (1.5,-4.1) [unclear, label={[label distance=-0cm]below right:\huge $_1$}];
			\node at (2,-5.6) [unclear, label={[label distance=-0cm]below right:\huge $_2$}];

			\draw[black,very thick,{Latex[length=5mm, width=5mm]}-{Latex[length=5mm, width=5mm]}] (-6.5,-0.5) node (v6) {} -- (-6.5,-10) node (v7) {};
			\node[align=center,rotate=90] at (-6.75,-9.2) {\LARGE benign};
			\node[align=center,rotate=90] at (-6.75,-1.4) {\LARGE random\vphantom{g}};

		\end{tikzpicture}
	}
	\end{center}
	\caption[Dummy LoF entry]{The number $x_1$ was constructed by H\"olzl and Janicki~\cite[Corollary 62]{HJ2023b}; by construction it is nearly computable, thus weakly $1$-generic by a result of Hoyrup~\cite{Hoy17}, thus not Schnorr random (see, for instance, Downey and Hirschfeldt~\cite[Proposition 8.11.9]{DH2010}). The number $x_2$ constructed in Theorem~\ref{satz-non-immune-non-superspeedable} cannot be Schnorr random either, because it is by construction not immune. To see that $x_3$~exists, consider the example used in the proof of Corollary~\ref{dsfdasjbndsnvnmasdfs}; it is strongly left-computable, thus not immune, thus not Schnorr random. Finally, $x_4$ is constructed in Theorem~\ref{fkhejkvfjksjkbasdsgad}.
	
	It is an open question whether numbers $\scalebox{0.55}{\begin{tikzpicture}[unclear/.style = {draw=black,thick,circle, densely dashed, inner sep=0.12cm,node contents={}}]
			\node at (0,0) [unclear];
	\end{tikzpicture}}_{\,1}$ or $\scalebox{0.55}{\begin{tikzpicture}[unclear/.style = {draw=black,thick,circle, densely dashed, inner sep=0.12cm,node contents={}}]
			\node at (0,0) [unclear];
	\end{tikzpicture}}_{\,2}$ exist; see the discussion in Section~\ref{finalsection}.
 }\label{sdfjsdfjdfmndsnmvbmnsdjkbf}
\end{figure}

\section{Speedability versus immunity}

Merkle and Titov~\cite{MT20} showed that all non-high and all strongly left-computable numbers are speedable. On the other hand they established that  Martin-L\"of randoms cannot be speedable. In this section we show that a left-computable number which is not immune has to be speedable.

We first observe that, as an alternative to Definition~\ref{def:speedable} above, speedability could also be defined using two other forms of speed quotients; this will prove useful in the following.
\begin{samepage}
	\begin{proposition}\label{characterisierung-speedable}
		Let $x \in \IR$. For an increasing sequence of rational numbers $(x_n)_n$ converging to $x$ with speed quotients $(\rho(n))_n$ the following statements are easily seen to be equivalent:
		\begin{enumerate}
			\item $\limsup_{n\to\infty} \rho(n) > 0$, that is, $(x_n)_n$ witnesses the speedability of $x$;
			\item $\limsup_{n\to\infty} \frac{x_{n+1} - x_n}{x - x_{n+1}} > 0$;
			\item $\liminf_{n\to\infty} \frac{x - x_{n+1}}{x - x_n} < 1$.\qed
		\end{enumerate}
	\end{proposition}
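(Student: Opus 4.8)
The plan is to reduce all three statements to a single condition on the ratios of successive ``remaining distances''. First I would set $d_n := x - x_n$; since $(x_n)_n$ is increasing and converges to $x$, and since the speed quotients are only meaningful when $x - x_n \neq 0$, we have $0 < d_{n+1} \le d_n$ for all $n$. Writing $r_n := d_{n+1}/d_n \in (0,1]$ and using $x_{n+1} - x_n = d_n - d_{n+1}$, a one-line computation rewrites the three relevant quantities as
\[
\rho(n) = 1 - r_n, \qquad \frac{x_{n+1}-x_n}{x - x_{n+1}} = \frac{1}{r_n} - 1, \qquad \frac{x - x_{n+1}}{x - x_n} = r_n .
\]

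Next I would show that each of the three statements is equivalent to $\liminf_{n\to\infty} r_n < 1$. Statement~(3) is this condition verbatim. For statement~(1), since $\limsup_{n\to\infty} \rho(n) = \limsup_{n\to\infty} (1 - r_n) = 1 - \liminf_{n\to\infty} r_n$, it holds iff $\liminf_{n\to\infty} r_n < 1$. For statement~(2), the bound $r_n \le 1$ forces $\frac1{r_n} - 1 \ge 0$; if $\liminf_{n\to\infty} r_n < 1$ there are infinitely many $n$ with $r_n < 1 - \varepsilon$ for some fixed $\varepsilon \in (0,1)$, whence $\frac1{r_n} - 1 > \frac{\varepsilon}{1-\varepsilon} > 0$ infinitely often and the $\limsup$ is positive; conversely, if $\liminf_{n\to\infty} r_n = 1$ then $r_n \to 1$, so $\frac1{r_n} - 1 \to 0$ and the $\limsup$ is $0$. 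Chaining these equivalences yields the proposition.

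There is essentially no hard step here; the statement is deliberately a warm-up observation, and the only points requiring care are the direction of the inequalities $0 < d_{n+1} \le d_n$ (which is exactly what makes $r_n \le 1$ and hence makes the reformulation of~(2) clean) and the degenerate case $x_n = x$, which is excluded since the speed quotients are considered only when $x - x_n \neq 0$. One could alternatively skip the substitution and argue directly from $x - x_n = (x_{n+1} - x_n) + (x - x_{n+1})$: with $a := x_{n+1} - x_n \ge 0$ and $b := x - x_{n+1} > 0$ the three quantities are $\frac{a}{a+b}$, $\frac{a}{b}$, and $\frac{b}{a+b}$, linked by the strictly monotone relations $\frac{a}{a+b} = 1 - \frac{b}{a+b}$ and $\frac{a}{b} = \frac{a/(a+b)}{1 - a/(a+b)}$, from which the equivalences of the corresponding $\limsup$/$\liminf$ conditions follow; but the $d_n$-formulation seems the most transparent.
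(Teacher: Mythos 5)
Your proof is correct, and the paper in fact gives no proof at all for this proposition (the \qed is attached to the statement itself, the equivalences being declared ``easily seen''); your substitution $d_n = x - x_n$, $r_n = d_{n+1}/d_n$ is precisely the intended elementary verification.
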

\end{samepage}

For any set $A \subseteq \IN$, we define the real number $x_A := \sum_{n \in A} 2^{-(n+1)}\in\left[0, 1\right]$. Clearly, $A$~is computable if and only if $x_A$ is computable. If $A$ is only assumed to be computably enumerable, then $x_A$ is a left-computable number; the converse is not true as pointed out by Jockusch (see Soare~\cite{Soa69a}). If there does exist a computably enumerable set $A \subseteq \IN$ with $x_A = x$, then $x \in \left[0,1\right]$ is called \emph{strongly left-computable}.

Recall that an infinite set $A \subseteq \IN$ is \emph{immune} if it does not have an infinite computably enumerable (or, equivalently, computable) subset. We call it {\em biimmune} if both $A$~and its complement~$\bar{A}$ are immune. A real number $x \in \left[0,1\right]$ is called \emph{immune} if there exists an immune set $A \subseteq \IN$ with~$x_A = x$; analogously for {\em biimmune}.%
\begin{theorem}\label{satz:non-immune-implies-speedable}
	Every left-computable number that is not immune is speedable.
\end{theorem}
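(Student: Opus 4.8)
The plan is to start from a left-computable number $x$ that is not immune and produce a left-approximation whose speed quotients have positive limsup. Since $x$ is not immune, there is an infinite \emph{computable} set $B = \{b_0 < b_1 < b_2 < \dots\}$ such that $B \subseteq A$ for some set $A$ with $x_A = x$; equivalently, for every $k$ the bit of position $b_k$ in the binary expansion of $x$ is $1$. (One has to be mildly careful about dyadic rationals, but these are computable, hence trivially speedable, so we may assume $x$ is irrational and the binary expansion is unique.) Fix in parallel some left-approximation $(y_n)_n$ of $x$ by rationals. The idea is to build a new left-approximation $(x_n)_n$ that, at infinitely many stages, jumps almost all the way to $x$ by exploiting the known $1$-bits at positions $b_k$.

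The key step is the following. At stage $n$, having defined $x_n$, wait (running the approximation $(y_m)_m$) until we find an $m$ and an index $k$ with $b_k$ large enough that $y_m$ already agrees with $x$ on all bit positions $< b_k$ — this is detectable because once $y_m > x - 2^{-b_k}$ we know the first $b_k$ bits of $y_m$ are correct, and $y_m \to x$ guarantees such $m$ exists. At that point set $x_{n+1}$ to be $y_m$ with the bit at position $b_k$ forced to $1$ (and lower bits as in $y_m$); since position $b_k$ is genuinely a $1$-bit of $x$, this keeps $x_{n+1} \le x$, and in fact $x_{n+1} \ge x - 2^{-(b_k+1)}$ while $x_{n+1} > y_m \ge x_n$, so the sequence is still increasing and converges to $x$. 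Crucially, $x - x_{n+1} \le 2^{-(b_k+1)}$ whereas $x_{n+1} - x_n \ge x_{n+1} - y_m \ge 0$ — this alone is not yet a good speed quotient, so I instead track the ratio across the jump: comparing $x - x_n$ (which is at least $2^{-b_k}$ roughly, since $x_n$ was a previous such approximation whose error we control) against $x - x_{n+1} \le 2^{-(b_k+1)}$ gives $\frac{x - x_{n+1}}{x - x_n} \le \tfrac12 + o(1)$, so by item~(3) of Proposition~\ref{characterisierung-speedable} applied along this subsequence, $x$ is speedable.

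To make the error bounds line up cleanly, I would interleave the ``jump'' stages with plain stages copying $(y_m)_m$, and at the $j$-th jump stage use an index $b_{k_j}$ chosen so that $b_{k_j}$ exceeds the position to which the previous jump brought us; then at jump stage $j+1$ we have $x - x_{n} \approx 2^{-b_{k_j}}$ from below and $x - x_{n+1} \le 2^{-(b_{k_{j+1}}+1)} < 2^{-b_{k_j}}$... the point is only to guarantee the ratio $\frac{x-x_{n+1}}{x-x_n}$ stays bounded below $1$ along an infinite subsequence, for which it suffices that each jump closes a definite fraction of the remaining gap. Using consecutive elements $b_k, b_{k+1}$ of $B$ is not enough (the gap could be huge); the fix is to let each jump target the \emph{next} element of $B$ strictly above the current correctness threshold, which is computable since $B$ is.

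The main obstacle I anticipate is bookkeeping the interaction between the two approximations: the auxiliary $(y_m)_m$ may converge very slowly, so the ``wait'' in the construction must be unbounded yet the resulting sequence $(x_n)_n$ must still be total and increasing. The clean way around this is to not wait at all on the plain stages — just set $x_{n+1} = \max(x_n, y_{n+1})$ — and only perform a jump when the current $y_{n+1}$ happens to have certified enough correct bits to reach the next available element of $B$; since $y_m \to x$ and $B$ is infinite, jumps occur infinitely often, and at each the gap is at least halved, which is exactly what item~(3) of the Proposition needs. A secondary technical point is handling the degenerate cases ($x$ rational, $x$ computable, $B$ finite), all of which are subsumed by $x$ being computable and hence trivially speedable.
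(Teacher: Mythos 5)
Your overall strategy---extract an infinite computable set $B=\{b_0<b_1<\dots\}$ of known $1$-bit positions of $x$ and use them to certify progress of a left-approximation---is indeed the idea behind the paper's proof. But the mechanism you propose for exploiting these bits has two genuine gaps, both at the crucial step. First, the trigger for a ``jump'' is not effective: the condition ``$y_m$ agrees with $x$ on all positions $<b_k$'' is a $\Pi_1$ property of the unknown limit $x$, so you cannot ``wait until'' it holds, and your proposed test is also mathematically wrong --- from $x-2^{-b_k}<y_m\le x$ it does \emph{not} follow that the first $b_k$ bits of $y_m$ are correct (take $x=0.101010\dots$, $b_k=3$, $y_m=0.0111111$: then $y_m>x-2^{-3}$ but the first three bits are $011$ versus $101$). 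Relatedly, ``$y_m$ with the bit at position $b_k$ forced to $1$'' can overshoot $x$ if you keep the bits of $y_m$ above $b_k$ (e.g.\ $x=0.1100\dots$, $b_k=1$, $y_m=0.1011\dots$). Second, even at a genuine jump you have no lower bound on $x-x_n$: the previous value may already be within $2^{-(b_k+1)}$ of $x$, in which case the jump closes no definite fraction of the gap; your proposed fix (``target the next element of $B$ strictly above the current correctness threshold'') presupposes knowing how many bits of the current approximation are correct, which is exactly the non-computable information you do not have.

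The paper avoids both problems by never attempting to detect convergence. It speeds up the approximation only at the computable stages where the next known element $h(n)\in A$ has appeared in the approximating sets, truncates each output at the \emph{minimum changed position} $s_n$ (a computable quantity), and then argues non-effectively: if $s_n\ge h(n)$ held for almost all $n$, the limit would be computable; so infinitely often $s_n<h(n)$, and at the \emph{last} stage $n'$ with $s_{n'}<h(n)$ the truncated value provably still lacks bit $h(n)$ while all later values contain it and agree up to it, giving a gain of at least $2^{-h(n)}$ against a remaining error of at most $2^{-h(n)}$ and hence a speed quotient bounded away from $0$ via item~(2) of Proposition~\ref{characterisierung-speedable}. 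Your sketch can be repaired in this spirit --- for instance by always adjoining the candidate values $0.(y_m\restriction b_k)+2^{-(b_k+1)}$, which never overshoot $x$ because truncation is monotone, and analysing the first stage at which each prefix $y_m\restriction b_k$ becomes correct --- but as written the construction is neither effective nor correct at its central step.
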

Before we give the proof we mention that left-computable numbers 
\begin{itemize}
	\item whose binary expansion is not biimmune or
	\item whose binary expansion is of the form $A \oplus A$
\end{itemize}
can be shown to be speedable by similar arguments as below.
\begin{proof}[Proof of Theorem~\ref{satz:non-immune-implies-speedable}]	
Let~$x$ be a left-computable real that is not immune. If it is computable, the theorem holds trivially, thus assume otherwise. Let~$A$ be such that~$x = x_A$; then there exists a computable increasing function $h \colon \IN \to \IN$ with $h(\IN) \subseteq A$. Let $(B[n])_n$ be a computable sequence of finite sets of natural numbers such that the sequence $(x_{B[n]})_n$ is increasing and converges to~$x_A$. Define the function $r \colon \IN\to \IN$ recursively via
		\[r(0) := 0, \quad
		r(n+1) := \min\{m > r(n) \colon\; \{h(0), \dots, h(n) \} \subseteq B[m] \}.\]
It is easy to see that $r$ is well-defined, computable and increasing.
	Let 
	\begin{equation*}
		s_n := \min\{m \in \IN \colon\;  m \in B[r(n+1)] \setminus B[r(n)] \}
	\end{equation*}
	for all $n \in \IN$. Clearly, the sequence $(s_n)_n$ is well-defined and tends to infinity. Finally, define the sequence $(C[n])_n$ via
		\[C[0] := \emptyset, \quad C[n+1] := B[r(n+1)] \restriction (s_n + 1),\]
	for all $n\in\IN$. Clearly, $(C[n])_n$ is a computable sequence of finite sets of natural numbers, which converge pointwise to~$A$, and the sequence $(x_{C[n]})_n$ is increasing and converges to $x$. 

We claim that there are infinitely many~$n$ such that~$s_n < h(n)$; this is because otherwise, for almost all~$n$, the sets~$B[r(n)], B[r(n+1)], \dots$ would all mutually agree on their first $h(n)-1$ bits, hence their limit~$A$ would be computable, contrary to our assumption. Thus, fix one of the infinitely many~$n$ with $s_n < h(n)$ and let~$n'\ge n$ be maximal such that~$s_{n'} < h(n)$. Then the sets~$B[r(n'+1)], B[r(n'+2)], \dots$ all contain~$h(n)$ and, by definition of~$s$ and~$n'$, agree mutually on their first~$(h(n)-1)$ bits; in other words, they even agree on their first~$h(n)$ bits.  But by choice of~$n'$, these sets also agree with the sets~$C[n'+1], C[n'+2], \dots$, respectively, on their first~$h(n)$ bits; thus these latter sets must also contain~$h(n)$ and agree mutually on their first~$h(n)$ bits. By definition, the set~$C[n']$ does \textit{not} contain~$h(n)$; thus, in summary, we have
$x_{C[n'+1]} - x_{C[n']} \geq 2^{-h(n)}$ and $x - x_{C[n'+1]} \leq 2^{-h(n)}$, hence
$\frac{x_{C[n'+1]} - x_{C[n']}}{x - x_{C[n'+1]}} \geq 1$.

Since there are infinitely many such~$n$ and corresponding~$n'$, the left-approximation~$(x_{C[n]})_n$ of~$x$ witnesses that~$x$ is speedable.
\end{proof}

\section{Superspeedability}

We now strengthen the conditions in Proposition~\ref{characterisierung-speedable} to obtain the following new definition.
\begin{definition}\label{characterisierung-superspeedable}
	Let $x \in \IR$. For an increasing sequence $(x_n)_n$ converging to $x$ with speed quotients $(\rho(n))_n$ the following statements are easily seen to be equivalent: 
	\begin{enumerate}
		\item $\limsup_{n\to\infty} \rho(n) = 1$;

		\item $\limsup_{n\to\infty} \frac{x_{n+1} - x_n}{x - x_{n+1}} = \infty$;
		
		\item $\liminf_{n\to\infty} \frac{x - x_{n+1}}{x - x_n} = 0$.
		
	\end{enumerate}
We call $x$ \emph{superspeedable} if it has a left-approximation~$(x_n)_n$ with these properties.
\end{definition}
The following statement, which bears some similarity to Theorem~\ref{satz:non-immune-implies-speedable}, shows that one simple cause for a left-computable number to be superspeedable is for it to have a left-approximation which permanently leaves arbitrarily long blocks of bits fixed to~$0$.
\begin{proposition}\label{asdbsdfjhsadnjdfgnassdfasdfgh}
Fix a sequence $(I_i)_{i\in \IN}$ of disjoint finite intervals in $\IN$ with the property that $|I_i| \geq i$ for all $n$. If $x\in \IR$ has a left-approximation $(x_n)_{n\in \IN}$ consisting only of dyadic rationals and such that for all $i\in\IN$ and $n\in \IN$ we have $x_n(j)=0$ for all $j\in I_i$, then $x$ is superspeedable.
\end{proposition}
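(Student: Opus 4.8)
The plan is to extract a combinatorial statement about truncations of~$x$ and then to effectivize it. If $x$ is a dyadic rational it is easily seen to be superspeedable (e.g.\ via the left-approximation $y_k:=x-2^{-c_k}$ with $(c_k)_k$ growing fast enough), so assume $x$ is not dyadic. Then the dyadic rationals $x_n$ converge to~$x$ bitwise: for each position~$j$ the truncations $\sum_{p<j}x_n(p)2^{-(p+1)}$ form a nondecreasing sequence bounded by $\sum_{p<j}x(p)2^{-(p+1)}$, and if its limit value were strictly smaller then every $x_n$ would lie strictly below $\sum_{p<j}x(p)2^{-(p+1)}$, forcing $x$ to be a multiple of $2^{-j}$, i.e.\ dyadic. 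Hence eventually $x_n(j)=x(j)$; in particular $x(j)=0$ for all $j\in\bigcup_i I_i$, and after reordering we may assume $I_i=[a_i,b_i]$ with $b_i<a_{i+1}$ and $b_i-a_i\ge i-1$.

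Write $u_i:=\sum_{p<a_i}x(p)2^{-(p+1)}$ for the \emph{block truncation} of~$x$ just before~$I_i$. Because positions $a_i,\dots,b_i$ of~$x$ vanish, $u_i\le x$, the sequence $(u_i)_i$ is nondecreasing with supremum~$x$, and $x-u_i\le 2^{-(b_i+1)}$. The key claim is that $\tfrac{x-u_{i+1}}{x-u_i}\le 2^{-i}$ for infinitely many~$i$. Indeed, whenever $x$ has a~$1$ at some position $q\in(b_i,a_{i+1})$ one gets $x-u_i\ge 2^{-(q+1)}\ge 2^{-a_{i+1}}$ while $x-u_{i+1}\le 2^{-(b_{i+1}+1)}\le 2^{-(a_{i+1}+i)}$, so the quotient is at most $2^{-i}$; and if this never happened for large~$i$, then $x$ would be~$0$ on every $(b_i,a_{i+1})$ and on every $I_{i+1}$, hence eventually~$0$, contradicting non-dyadicity. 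A short telescoping argument then shows that \emph{every} subsequence $(u_{i_k})_k$ with $i_k\to\infty$ still satisfies $\liminf_k\tfrac{x-u_{i_{k+1}}}{x-u_{i_k}}=0$, so in view of condition~(3) of Definition~\ref{characterisierung-superspeedable} it would suffice to exhibit such a subsequence that is a computable left-approximation.

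The real work lies precisely here: $u_i=\lim_n\sum_{p<a_i}x_n(p)2^{-(p+1)}$, but one cannot effectively recognise when this limit has been reached, so $(u_i)_i$ need not be computable. I would therefore produce a computable left-approximation $(y_k)_k$ by a guessing argument in the spirit of the re-indexing in the proof of Theorem~\ref{satz:non-immune-implies-speedable}. One maintains, for each~$i$, a marker~$\gamma_i$ which is moved to the current stage whenever $\sum_{p<a_i}x_{\mathrm{stage}}(p)2^{-(p+1)}$ disagrees with $\sum_{p<a_i}x_{\gamma_i}(p)2^{-(p+1)}$; since the relevant truncation stabilises, $\gamma_i$ stabilises too, after which $x_{\gamma_i}$ agrees with~$x$ on the first $a_i$ bits. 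One builds $(y_k)_k$ in rounds: in round~$k$, having emitted $y_k=x_{m_k}$, one computes a rational $\ell_k>0$ with $\ell_k\le x-y_k$ (take $\ell_k=x_{m'}-x_{m_k}$ for the least $m'>m_k$ with $x_{m'}>x_{m_k}$; note $\ell_k\to 0$), picks a block index~$i_k$ with $2^{-b_{i_k}}<\ell_k\cdot 2^{-(k+1)}$, and emits $y_{k+1}:=x_N$ with $N:=\max(\gamma_{i_k},m')$. If $\gamma_{i_k}$ has stabilised by then, $x_N$ agrees with~$x$ on the first $a_{i_k}$ bits, hence --- using the zero block $I_{i_k}$ --- on the first $b_{i_k}$ bits, so $x-y_{k+1}\le 2^{-(b_{i_k}+1)}<(x-y_k)\cdot 2^{-(k+1)}$, giving $\tfrac{x-y_{k+1}}{x-y_k}<2^{-(k+1)}$ at that round.

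Since $\ell_k\to 0$ forces $i_k\to\infty$ automatically, the only remaining point --- and the main obstacle --- is to organise the rounds and the assignment of block indices (revisiting block indices via a priority/finite-injury-style bookkeeping, so as to postpone using block~$i$ until its marker has moved often enough) so that infinitely many rounds use a block whose marker has by then already stabilised. Given such a schedule, the attained quotients $2^{-(k+1)}$ tend to~$0$ along the corresponding rounds, and everything else --- that $(y_k)_k$ is a strictly increasing computable sequence of dyadic rationals below~$x$ with supremum~$x$ --- is a routine check; this scheduling is the only part requiring real care.
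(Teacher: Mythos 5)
There is a genuine gap, and it sits exactly where you say it does: the ``scheduling'' of rounds so that infinitely many of them use a block whose marker has already stabilised is never carried out, and it is the crux of your construction --- without it you have produced no computable left-approximation achieving small quotients. Deferring what you yourself call ``the main obstacle'' to unspecified priority/finite-injury bookkeeping leaves the proof incomplete at its decisive step.

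More importantly, the entire effectivization is a detour, which is why the intended proof is a few lines long. You correctly isolate the combinatorial heart of the matter (the block truncations $u_i$ collapse the distance to $x$ by a factor of about $2^{-i}$), but you then try to turn the non-computable sequence $(u_i)_i$ into a computable one. Instead, apply the same estimate directly to the \emph{given} left-approximation $(x_n)_n$, which is computable by hypothesis: for each sufficiently large~$i$, let $n$ be maximal such that $x_n(j)\neq x(j)$ for some $j<\min I_i$. At this single step the leftmost changing bit lies left of $I_i$ and flips from $0$ to $1$, and since all $x_m$ vanish on $I_i$, one gets $x_{n+1}-x_n\geq 2^{-\min I_i}-2^{-\max I_i}$; on the other hand, after stage $n+1$ nothing left of $I_i$ changes and $I_i$ stays zero, so $x-x_{n+1}\leq 2^{-\max I_i}$. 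Hence
\[
\frac{x-x_{n+1}}{x-x_n}\;\leq\;\frac{x-x_{n+1}}{x_{n+1}-x_n}\;\leq\;\frac{2^{-\max I_i}}{2^{-\min I_i}-2^{-\max I_i}},
\]
which tends to $0$ as $i\to\infty$ because $\max I_i-\min I_i\geq i-1$. Condition~(3) of Definition~\ref{characterisierung-superspeedable} only requires that $\liminf_n\frac{x-x_{n+1}}{x-x_n}=0$ for \emph{some} left-approximation; it does not require that the stages realising the small quotients be computably recognisable. Your marker-and-rounds machinery solves a problem the definition does not pose. (The first half of your write-up --- bitwise convergence for non-dyadic $x$, the estimates on the $u_i$, the telescoping --- is essentially correct, except that ``reordering'' the intervals should be ``passing to a monotone subsequence'': reindexing by position need not preserve $|I_i|\geq i$.)
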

\begin{proof}
	Fix any~$i$ and let~$n$ be maximal such that $x_n(j)\neq x(j)$ for some $j<\min I_i$; that is, let $n+1$ be the last stage at which a bit left of $I_i$ changes during the approximation of~$x$. Then, on the one hand, $x_{n+1}-x_n \geq 2^{-\min(I_i)}$. On the other hand, by the maximality of~$n$ and by the assumptions on $(x_n)_{n\in \IN}$, we must have $x-x_{n+1} \leq 2^{-\max(I_i)}$. Therefore we have
	\[\frac{x-x_{n+1}}{x-x_n} \leq \frac{x-x_{n+1}}{x_{n+1}-x_n} \leq \frac{2^{-\max(I_i)}}{2^{-\min(I_i)}} \leq  2^{-(i-1)},\]
	which tends to~$0$ as $i$ tends to infinity.
\end{proof}

Obviously, every superspeedable number is speedable.
Barmpalias~\cite{BarmDirect} asked whether the converse is true as well. We give a negative answer with the following theorem and corollary.
\begin{theorem}\label{satz-non-immune-non-superspeedable}
There is a left-computable number that is neither immune nor superspeedable.
\end{theorem}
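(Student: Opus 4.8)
The plan is to construct $x$ directly as a left-computable real, presented by a computable increasing sequence of dyadic rationals $(x[s])_s$, that satisfies for every index $e$ the requirement
\[
N_e:\quad\text{if }(\varphi_e(n))_n\text{ is a left-approximation of }x,\text{ then }\limsup_{n\to\infty}\rho(n)<1,
\]
where $\varphi_0,\varphi_1,\dots$ enumerates all partial computable functions $\IN\to\IQ$ and $\rho(n)=\frac{\varphi_e(n+1)-\varphi_e(n)}{x-\varphi_e(n)}$. By Definition~\ref{characterisierung-superspeedable}, satisfying all the $N_e$ says precisely that $x$ is not superspeedable; note it also forces $x$ to be non-computable, so the approximations $x[s]$ never reach $x$. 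To make $x$ not immune I would hard-wire two sparse infinite computable sets of bit positions, setting $x(2^{2^k})=1$ and $x(2^{2^k}+1)=0$ for every $k$ and never touching these positions again: the $1$'s put an infinite computable set into the binary expansion of $x$, so it is not immune, and the forced $\dots 10\dots$ patterns at positions with unbounded gaps make the expansion aperiodic, hence $x$ irrational (convenient below). Every other position is a \emph{work position}, which the construction may switch from $0$ to $1$.

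The strategy for $N_e$ uses a fixed rational $\varepsilon_e=2^{-e-3}$ and is a run-away strategy. For a candidate $(y_n)_n=(\varphi_e(n))_n$, step $n$ is \emph{safe} once $x\ge T_n:=y_n+\frac{y_{n+1}-y_n}{1-\varepsilon_e}$, because this is equivalent to $\rho(n)\le 1-\varepsilon_e$. At the first stage $s$ at which $\varphi_e(n)$ and $\varphi_e(n+1)$ are both defined, the sequence so far is nondecreasing, and $y_{n+1}\le x[s]$, we examine step $n$: if already $x[s]\ge T_n$ it is permanently safe, since $x\ge x[s]$; otherwise we act, switching on fresh work bits until $x[s]\ge T_n$. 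This action raises $x$ by $T_n-x[s]\le T_n-y_{n+1}=\frac{\varepsilon_e}{1-\varepsilon_e}(y_{n+1}-y_n)$. The key point is that the cost is controlled by the \emph{jump length} $y_{n+1}-y_n$, so the total expenditure of $N_e$ is at most $\frac{\varepsilon_e}{1-\varepsilon_e}\sum_n(y_{n+1}-y_n)\le\frac{\varepsilon_e}{1-\varepsilon_e}\le 2\varepsilon_e$, and $\sum_e 2\varepsilon_e<1$; together with the negligible weight of the hard-wired $1$'s this keeps $x\in[0,1]$ and makes $(x[s])_s$ converge. Moreover the construction needs no injury management: every action only increases $x$, and since $t\mapsto\frac{t-b}{t-a}$ is increasing for $a<b$, increasing $x$ only decreases each $\rho(n)$; hence acting for $N_e$ never un-safes a step that another requirement had made safe, and the $N_e$ may simply be pursued in parallel.

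For the verification, suppose $\varphi_e$ really is a left-approximation of $x$. Since $x$ is irrational, every value $y_{n+1}$ lies strictly below $x=\sup_s x[s]$, so the triggering stage for step $n$ does occur and step $n$ ends up safe; thus $\rho(n)\le 1-\varepsilon_e$ for \emph{all} $n$, whence $\limsup_n\rho(n)\le 1-\varepsilon_e<1$, and $\varphi_e$ does not witness superspeedability. As $e$ was arbitrary, $x$ is not superspeedable; by construction it is left-computable and not immune, completing the proof.

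The step I expect to be the main obstacle is exactly this quantitative control. The tempting but doomed version of the strategy reacts to a near-$1$ value of $\rho(n)$ by pushing $x$ up by an amount comparable to the remaining gap $x-y_n$; since that gap shrinks at no effective rate, such a strategy can be forced to act with non-summable total cost. The resolution is the observation above: one only needs to reach $T_n$, and the distance from $T_n$ down to the already-committed value $y_{n+1}$ is bounded by the jump length $y_{n+1}-y_n$, and jump lengths telescope to a finite sum. The rest is routine bookkeeping: realizing the prescribed tiny increases of $x$ by switching on suitably placed work bits (the unavoidable overshoot is bounded and absorbed into the budget), checking that each work position is eventually decided so that $x=\lim_s x[s]$ genuinely exists, and noting that non-monotone, non-convergent, or overshooting $\varphi_e$ render $N_e$ vacuously true.
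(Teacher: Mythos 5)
Your diagonalization has the same skeleton as the paper's proof --- defeat every candidate left-approximation by repeatedly running away from it, while hard-wiring an infinite computable set of $1$s into the expansion for non-immunity --- but the implementation is genuinely different. The paper works purely at the level of bit patterns: requirement $e$ owns a fixed sparse set of ``coding bits'', and whenever the $e$-th approximation catches up to the current $A$ (a ``threat''), the least threatened coding bit is switched to $1$ and all later coding bits are cleared. This needs no budget at all (the value automatically stays in $[0,1]$ and is nondecreasing because every reset of later bits is paid for by an earlier bit switching on), and the fixed gap between consecutive $e$-coding positions directly yields a bound $\rho_e<1$ on the quotients between consecutive threats. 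Your version replaces this by the numerical threshold $T_n$ together with the observation that reaching $T_n$ costs at most $\frac{\varepsilon_e}{1-\varepsilon_e}(y_{n+1}-y_n)$, which telescopes; this isolates very cleanly \emph{why} the construction is affordable, and it makes the absence of injury immediate, since the condition $x\ge T_n$ is monotone in $x$. Two small repairs: the telescoping bound $\sum_n(y_{n+1}-y_n)\le 1$ needs $y_0\ge 0$, so you should simply ignore the finitely many steps with $y_n<0$ (the $\limsup$ does not see them); and the irrationality argument via the $10$ patterns is both not quite correct as stated and not needed, since an increasing sequence converging to $x$ automatically has all of its terms strictly below $x$.

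The one step I would not accept as routine is the realization of the requested increments. Because the non-immunity witness lives in the binary expansion of the limit, you must add each amount $\delta$ by switching work bits from $0$ to $1$ with no carries, and the set of amounts realizable in this way at a given stage depends on \emph{which} positions are still available, not merely on their total mass: if, say, the unused work positions happen to be $\{0\}\cup[100,\infty)$, then a request of $1/4$ can only be met with overshoot $1/4$. So ``absorbing the overshoot into the budget'' requires an actual mechanism --- for instance, pre-reserving for each requirement (or each individual action) a dedicated computable set of positions that retains a fixed positive fraction of the tail mass $2^{-m}$ beyond every $m$, and realizing $\delta$ greedily inside that reserved set with overshoot shrinking geometrically in the action index. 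This is standard and can certainly be carried out, but it is exactly the complication that the paper's coding-bit formulation is designed to sidestep. With that bookkeeping supplied, your argument establishes the theorem.
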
 
\begin{proof}
	Let $r\colon\IN \to \IN$ be the function defined by $r(n):=\max\{\ell \in \IN \colon\;  2^\ell \text{ divides } n+1\}$, that is, the characteristic sequence of~$r$ is the member of Baire space with the initial segment
	\[010201030102010401020103\dots\]
We construct a number $\alpha$ by approximating its binary expansion $A$. We will slightly abuse notation and silently identify real numbers with their infinite binary expansions; similarly we identify finite binary sequences $\sigma$ with the rational numbers $0.\sigma$.

We approximate~$A$ in stages where during stage~$s\in\IN$ we define a set~$A[s]$ such that the sets~$A[s]$ converge pointwise to~$A$. For all~$s$ and all $n$ with $r(n)=0$, let~$A[s](n)=0$; and for all~$s$ and all $n$ with $r(n)=1$, let~$A[s](n)=1$. We refer to all remaining bits as the {\em coding bits}.
More precisely, we refer to all bits with $r(n)=e+2$ as the {\em $e$-coding bits} and we let $(c^e_i)_i$ be the sequence of the positions of all $e$-coding bits in ascending order.
	
If $b=c^e_i$ for some $e$ and $i$, then we say that position~$b$ is {\em threatened at stage $s$} if $A[s](b)=0$ and $A[s]\ut c^e_{i+1} = B_e[s] \ut c^e_{i+1}$, where~$(B_0[s])_s, (B_1[s])_s, \ldots$ is some fixed uniformly effective enumeration of all left-computable approximations.	

\begin{claimproof}[Construction]
		At every stage~$s$, if there exists a least coding bit $b \leq s$ that is threatened, then set $A[s+1](b)=1$ and $A[s+1](c)=0$ for all coding bits $c>b$. For all other $n\in\IN$, leave bit~$n$  unchanged, that is, $A[s+1](n)=A[s](n)$.\phantom{\qedhere}
\end{claimproof}

\begin{claimproof}[Verification] 
By construction, once a coding bit has been set to $1$, it can only be set back to~$0$ if a coding bit at an earlier position is set from $0$ to $1$. From this it follows by an obvious inductive argument that $A[s]$ converges to some limit $A$ in a left-computable fashion; that is, interpreted as real numbers, the values~$A[0], A[1], \ldots$ are nondecreasing. Thus~$\alpha$ is left-computable.\phantom{\qedhere}
	
As it contains all numbers~$n$ with $r(n)=1$, the set $A$ we are constructing is trivially not immune. All that remains to show is that it is not superspeedable.  To that end, fix any~$e$ and assume that that the left-computable approximation~$(B_e[s])_s$ converges to~$A$; as otherwise there is nothing to prove for~$e$. We will show that this left-computable approximation  does not witness superspeedability of~$A$. 
\end{claimproof} 
	
\begin{claim*}	
There are stages~$s_0 < s_1 < \ldots$ such that for all~$i\ge 0$, at stage~$s_i$,  
the coding bit~$b_i=c^e_{i}$ is threatened for the last time and is the least coding bit that is threatened at this stage, and such that $A$ has already converged at this stage up to its~$(b_i+4)$-th bit; that is, for all~$\ell \geq 1$ we have~$A[s_i] \ut (b_i+4) = A[s_i+\ell] \ut (b_i+4) = A \ut (b_i+4)$.
\end{claim*}	
\begin{claimproof}
Fix some $e$-coding bit~$b_i=c^e_{i}$. By the discussion preceding the claim, there is some least stage~$s$ such that set~$A$ has already converged up to its~$b_i$-th bit. By construction, it must hold that~$A[s](b_i)=0$ and
since the left-approximation~$(B_e[s])_s$ converges to~$A$, there must be a stage $s'>s$ at which $b_i$~becomes threatened and is the least threatened coding bit. Thus, during stage~$s'$, the bit~$A(b_i)$ is set permanently to~$1$ and, because the two bits following~$b_i$ are no coding bits, $A$~has now converged up to its~$(b_i+4)$-th bit.  
Therefore, it suffices to set~$s_i=s'$ and to observe that we have~$s_i < s_{i+1}$ for all~$i$ because $e$-coding bit~$c^e_{i+1}$ is set to~$0$ at stage~$s_i$, hence is threatened again after stage~$s_i$.
\end{claimproof}
Let~$s_0, s_1, \ldots$ be the stages from the last claim.
\begin{claim*}
There is an $\rho_e<1$ only depending on $e$ such that for all~$i$ and~${s \in \{s_i,  \ldots, s_{i+1}-1\}}$,
\[
\frac{B_e[s+1]-B_e[s]}{A-B_e[s]} \le \rho_e .
\]
\end{claim*}
\begin{claimproof}
Fix some~$i$ and some stage~$s$ as above. Let~$b=c^e_{i}$ and let~$b'=c^e_{i+1}$. Then~$A$ has already converged up to its~$b$-th bit at stage~$s_i$ and by definition of a threat we must have~${B_e[s_i]\ut b =A[s_i]\ut b}$. But the left-approximation~$(B_e[s])_s$ converges to~$A$, hence can never ``overshoot''~$A$ and thus at stage~$s_i$ has already converged up to its~$b$-th bit as well. Consequently, we have~$B_e[s+1]-B_e[s]\le 2^{-(b-1)}$. 

By construction and our assumptions,
\begin{itemize}
	\item at stage~$s_{i+1}$, bit~$b'$ is set from~$0$ to~$1$,
	\item bit~$b'+1$ permanently maintains value~$0$ and,
	\item by definition of a threat, $A[s_{i+1}]$~and~$B_e[s_{i+1}]$ agree on their first~$b'+2 < c^e_{i+2}$ many bits.
\end{itemize}
This implies that
\[
A-B_e[s+1] 
\ge A[s_{i+1}+1] - B_e[s_{i+1}] 
\ge 2^{-(b'+1)} = 2^{-(b + 2^{e+3} + 1)}, %= 2^{-e} 2^{-(b+1)}
\]
and in summary we have
\[
\begin{split}
\frac{B_e[s+1] - B_e[s]}{A - B_e[s]} 
&= \frac{B_e[s+1] - B_e[s]}{(A - B_e[s+1])+(B_e[s+1] - B_e[s])} 
\\
&
%\leq \frac{2^{-(b-1)}}{(A - B_e[s+1])+2^{-(b-1)}}
\leq \frac{2^{-(b-1)}}{2^{-(b + 2^{e+3} + 1)}+2^{-(b-1)}}
\leq \frac{ 2^{2+2^{e+3}}}{1 + 2^{2+2^{e+3}}}<1,
\end{split}
\]	
hence there is a constant~$\rho_e<1$ as claimed.
\end{claimproof}
Since~$e$ was arbitrary such that~$(B_e[s])_s$ converges to~$A$, there cannot be a left-approximation that witnesses superspeedability of $A$.
\end{proof}
By Theorem~\ref{satz:non-immune-implies-speedable}, the number $\alpha$ constructed in Theorem \ref{satz-non-immune-non-superspeedable} is speedable. 
%Furthermore, by a result of Janicki~\cite[Theorem 24]{Jan2023}, it is reordered computable as well.
\begin{corollary}
There exists a speedable number which is not superspeedable.\qed
\end{corollary}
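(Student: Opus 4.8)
The plan is to reuse the number $\alpha$ constructed in the proof of Theorem~\ref{satz-non-immune-non-superspeedable} as the required witness. That construction already supplies two of the three properties we need: $\alpha$ is left-computable, and $\alpha$ is not superspeedable. So the only thing left to establish is that $\alpha$ is speedable.

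To see this, I would observe that the binary expansion $A$ of $\alpha$ contains, by construction, every position $n$ with $r(n)=1$, and these positions form an infinite computable set; hence $A$ is not immune, so $\alpha$ is a left-computable number that is not immune. Theorem~\ref{satz:non-immune-implies-speedable} then applies verbatim and yields that $\alpha$ is speedable.

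Putting the pieces together, $\alpha$ is speedable but not superspeedable, which proves the corollary; since every superspeedable number is speedable, it also shows that the superspeedable numbers form a strict subclass of the speedable numbers, answering Barmpalias' question in the negative. I expect no genuine obstacle in this argument: all the difficulty has been absorbed into Theorems~\ref{satz:non-immune-implies-speedable} and~\ref{satz-non-immune-non-superspeedable}, and the one bookkeeping step --- checking that the hypothesis ``not immune'' of the former is met --- is immediate from the way $\alpha$ was built.
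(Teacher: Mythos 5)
Your proposal is correct and follows exactly the paper's argument: the paper likewise notes that the number $\alpha$ from Theorem~\ref{satz-non-immune-non-superspeedable} is not immune (hence speedable by Theorem~\ref{satz:non-immune-implies-speedable}) and not superspeedable. No differences worth noting.
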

Thus, superspeedability is strictly stronger than speedability; we now proceed to identify interesting classes of numbers that have this property. We begin with the class of regainingly approximable numbers. It was shown by H\"olzl and Janicki~\cite{HJ2023b} that regainingly approximable numbers are speedable. This result can be strengthened; to show that regainingly approximable numbers are in fact superspeedable, we will use the following characterization.

\goodbreak

\begin{proposition}[Hertling, H\"olzl, Janicki \cite{HHJ2023}]\label{prop:characterization-reg-app}
For a left-computable number~$x$ the following are equivalent:
\begin{enumerate}[(1)]
\item $x$ is regainingly approximable.
\item For every computable unbounded function $f \colon \IN \to \IN$ there exists a computable increasing sequence of rational numbers $(x_n)_n$ converging to $x$ with $x - x_n \leq 2^{-f(n)}$ for infinitely many $n \in \IN$.
\end{enumerate}
\end{proposition}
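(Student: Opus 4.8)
The plan is to prove both implications, with (2) $\Rightarrow$ (1) immediate and (1) $\Rightarrow$ (2) carrying all the work. For (2) $\Rightarrow$ (1) I would simply instantiate statement~(2) with the identity function $n \mapsto n$, which is computable and unbounded; the resulting sequence is by definition a witness for regaining approximability.

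For (1) $\Rightarrow$ (2) I would first normalize $f$. Given an arbitrary computable unbounded $f\colon\IN\to\IN$, replace it by $\hat f(n) := \max_{i \le n} f(i)$, which is computable, nondecreasing, unbounded, and satisfies $\hat f \ge f$, hence $2^{-\hat f(n)} \le 2^{-f(n)}$. Thus any approximation that works for $\hat f$ also works for $f$, so I may assume $f$ itself is nondecreasing. Now fix a witness $(x_n)_n$ for the regaining approximability of $x$, say with $x - x_n \le 2^{-n}$ for all $n$ in an infinite set $G$, and define a computable strictly increasing function $g$ recursively by the convention $g(-1) := -1$ and $g(m) := \max\{f(m+1),\, g(m-1)+1\}$. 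The candidate approximation is $y_m := x_{g(m)}$; since $g(m)\to\infty$, this is a computable strictly increasing sequence of rationals converging to $x$.

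It then remains to check that $x - y_m \le 2^{-f(m)}$ for infinitely many $m$. The key observation is the covering property $\bigcup_m [f(m), g(m)] \supseteq \bigcup_m [f(m), f(m+1)] = \{n\in\IN : n \ge f(0)\}$, which holds because $g(m) \ge f(m+1) \ge f(m)$ and $f$ is nondecreasing and unbounded. Consequently, every $n \in G$ with $n \ge f(0)$ lies in some window $[f(m), g(m)]$, and for such an $m$ one gets $x - y_m = x - x_{g(m)} \le x - x_n \le 2^{-n} \le 2^{-f(m)}$. Moreover, the index $m$ attached to a given $n$ satisfies $g(m) \ge n$, so as $n$ ranges over the infinite set $G$ these indices cannot stay bounded (otherwise the strictly increasing $g$ would be bounded on an infinite set); hence infinitely many suitable $m$ are obtained.

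I expect the main obstacle to be precisely this covering step. The naive attempt of reading $(x_n)_n$ along a subsequence indexed directly by $f$ fails when $f$ grows quickly: the windows one controls then degenerate and need not catch any element of the (noncomputable) set $G$ of good stages. Introducing the function $g$ — deliberately large enough that consecutive windows overlap — is exactly what guarantees that the windows cover a tail of $\IN$ regardless of where the good indices sit, and this is the one place where some care is genuinely required; everything else is routine bookkeeping.
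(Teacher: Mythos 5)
Your proof is correct. The paper itself states this proposition without proof, importing it from Hertling, H\"olzl and Janicki \cite{HHJ2023}, so there is no in-text argument to compare against; judged on its own, your argument is complete: the instantiation $f=\mathrm{id}$ for (2)$\Rightarrow$(1) is exactly right, and for (1)$\Rightarrow$(2) the normalization to a nondecreasing $f$, the choice $g(m)=\max\{f(m+1),g(m-1)+1\}$ making $g$ strictly increasing with $g(m)\ge f(m+1)$, and the resulting covering of a tail of $\IN$ by the windows $[f(m),g(m)]$ together give $x-y_m=x-x_{g(m)}\le x-x_n\le 2^{-n}\le 2^{-f(m)}$ whenever a good index $n$ falls in the $m$-th window, with infinitely many distinct $m$ arising because $g$ is strictly increasing. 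You also correctly identified the one genuinely delicate point, namely that naively composing with $f$ fails when $f$ grows fast, which is precisely why the overlapping-windows device is needed.
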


\begin{theorem}\label{nsdfjsfjkldasjfnadsjsfdnsdadnjsad}
Every regainingly approximable number is superspeedable.
\end{theorem}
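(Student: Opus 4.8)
The plan is to use the characterization of regainingly approximable numbers provided by Proposition~\ref{prop:characterization-reg-app}, applied to a cleverly chosen computable unbounded function~$f$. Recall that for superspeedability we must exhibit a \emph{single} left-approximation $(x_n)_n$ of $x$ whose speed quotients satisfy $\liminf_{n\to\infty} \frac{x - x_{n+1}}{x - x_n} = 0$, equivalently $\limsup_n \rho(n) = 1$. The obstacle with the naive approach is that the characterization gives us, for each $f$, a left-approximation in which $x - x_n \leq 2^{-f(n)}$ holds infinitely often, but it says nothing about the behaviour of $x - x_{n+1}$ at those indices, so a single application does not obviously control a \emph{ratio} of consecutive errors.

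The key idea is to choose $f$ so that $2^{-f(n)}$ is much smaller than the ``generic'' error size $x - x_m$ can be at the preceding index. Concretely, start with \emph{any} left-approximation $(y_m)_m$ of $x$ (which exists since $x$ is left-computable), and define a computable unbounded function $f$ growing slowly enough that, whenever $f(n)$ is large, the value $2^{-f(n)}$ is dwarfed by the gap $x - y_m$ for the relevant earlier stage. The cleanest way to make this rigorous: apply Proposition~\ref{prop:characterization-reg-app} to $f$ itself after first arranging, by a standard speed-up/interleaving of the two computable increasing sequences, that the $(x_n)_n$ obtained from the proposition and a reference approximation $(y_m)_m$ advance in lockstep. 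Then at each index $n$ where $x - x_n \le 2^{-f(n)}$, compare with $x - x_{n-1}$: since $(x_n)_n$ is strictly below $x$, the previous error $x - x_{n-1}$ is at least the jump $x_n - x_{n-1}$, and we can force this jump to be bounded below by a fixed positive quantity (or at least by $2^{-g(n)}$ for a function $g$ with $f(n) - g(n) \to \infty$) by only ``recording'' an index $n$ in our final approximation when a genuine advance of size at least $2^{-g(n)}$ has just occurred. This yields $\frac{x - x_n}{x - x_{n-1}} \le \frac{2^{-f(n)}}{2^{-g(n)}} = 2^{g(n) - f(n)} \to 0$ along an infinite set of indices, which is exactly condition~(3) of Definition~\ref{characterisierung-superspeedable}.

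In more detail, the steps I would carry out are: (i) fix a reference left-approximation $(y_m)_m$ of $x$ with rational values; (ii) define $g(n)$ to be, say, the position of the most significant bit in which $y_n$ differs from $y_{n-1}$, or more simply let $g$ be any computable unbounded function we are free to prescribe, and set $f(n) := 2 g(n)$ (or $f(n) := g(n) + n$), which is computable and unbounded; (iii) apply Proposition~\ref{prop:characterization-reg-app}(2) to this $f$ to obtain a left-approximation $(z_n)_n$ of $x$ with $x - z_n \le 2^{-f(n)}$ for infinitely many $n$; (iv) merge $(z_n)_n$ with a sequence witnessing $g$-sized advances into a single computable increasing sequence $(x_k)_k$ converging to $x$, designed so that infinitely often a step of size $\ge 2^{-g(k)}$ is immediately followed by reaching error $\le 2^{-f(k+1)}$; (v) conclude $\liminf_k \frac{x - x_{k+1}}{x - x_k} \le \liminf_k 2^{-f(k+1)}/2^{-g(k)} = 0$.

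I expect the main obstacle to be step~(iv): ensuring that the step delivering the small error $2^{-f(n)}$ is genuinely \emph{adjacent} in the approximation to a step of size at least $2^{-g(n)}$, rather than separated from it by many tiny intermediate steps that would inflate the denominator's predecessor error and spoil the ratio. This is handled by a bookkeeping argument: we are allowed to define our own left-approximation, so between the ``large advance'' and the ``small error'' moments we simply insert no intermediate rationals at all, i.e.\ we take the large advance and the arrival at $z_n$ to be two consecutive terms of $(x_k)_k$. One must check that this still produces an \emph{increasing} sequence converging to $x$ and that the advances of size $\ge 2^{-g(n)}$ really are available infinitely often before the good indices of $(z_n)_n$ — but since $x$ is not computable (the computable case being trivial, as every computable number is regainingly approximable and superspeedable), $x - y_m$ does not go to $0$ faster than any prescribed computable rate, so such advances are indeed cofinal, and the construction goes through.
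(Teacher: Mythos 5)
Your overall idea---use the freedom in the choice of $f$ in Proposition~\ref{prop:characterization-reg-app} to make the guaranteed residual error $2^{-f(n)}$ decay much faster than some guaranteed step size $2^{-g(n)}$---is the right one, and it is essentially the idea behind the paper's proof. But step~(iv), which you yourself flag as the main obstacle, is where the entire content of the theorem lies, and your proposed resolution does not close it. The set of \emph{good} indices $n$ with $x - z_n \le 2^{-f(n)}$ is not computable, so you cannot selectively arrange that exactly at those indices a large advance is immediately followed by the arrival at~$z_n$; you must guarantee a minimum step size at \emph{every} index of the final approximation. Doing this by thinning $(z_n)_n$ (``inserting no intermediate rationals'') runs into a termination problem: the rule ``wait until the approximation has advanced by $2^{-g(k)}$ since the last recorded term'' may never fire once $x$ minus the last recorded term has dropped below $2^{-g(k)}$, and you cannot computably detect or prevent this. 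Your closing observation that $x - y_m \ge 2^{-g(m)}$ holds cofinally because $x$ is noncomputable is true but concerns the wrong quantity: it yields infinitely many indices with a large \emph{error}, not a large \emph{jump} sitting immediately before a good index of $(z_n)_n$, and these two infinite sets of indices need not be aligned.

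The paper resolves both difficulties at once with a small trick worth comparing to your sketch: take $(x_n)_n$ from Proposition~\ref{prop:characterization-reg-app} with $x - x_n \le \frac{1}{(n+1)!}$ for infinitely many $n$ and pass to $y_n := x_n - \frac{1}{n!}$. Every step is then at least $\frac{1}{n!} - \frac{1}{(n+1)!}$ while every error is inflated by only $\frac{1}{n!}$, so at each good index the speed quotient is at least $\frac{n}{n+2} \to 1$; no interleaving is needed and the good indices never have to be located. Note that the super-geometric decay of the offsets $\frac{1}{n!}$ is essential and is the quantitative condition missing from your proposal: the guaranteed steps must dominate their own tail sum. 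A version of your construction with steps decaying only geometrically (say $g(k)=2k$ and $f=2g$) leaves the tail $\sum_{j>k}2^{-g(j)}$ comparable to the current step and produces a speed quotient bounded away from~$1$, which only re-proves speedability of regainingly approximable numbers, already known from H\"olzl and Janicki~\cite{HJ2023b}.
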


\begin{proof}
Let $x$ be a regainingly approximable number, and fix a computable increasing sequence $(x_n)_n$ of rational numbers that converges to $x$ and satisfies $x - x_n \leq \frac{1}{(n+1)!}$ for infinitely many $n \in \IN$. Define the sequence $(y_n)_n$ with $y_n := x_n - \frac{1}{n!}$ for all $n \in \IN$. Surely, this sequence is computable, increasing and converges to $x$ as well. Considering any of the~$n \in \IN$ with $x - x_n \leq \frac{1}{(n+1)!}$, we obtain
	\begin{equation*}
		\frac{y_{n+1} - y_n}{x - y_n} 
		= \frac{(x_{n+1} - x_n) + \frac{1}{n!} - \frac{1}{(n+1)!}}{(x - x_n) + \frac{1}{n!}} 
		> \frac{\frac{1}{n!} - \frac{1}{(n+1)!}}{\frac{1}{n!} + \frac{1}{(n+1)!}}
		= \frac{\frac{n}{(n+1)!}}{\frac{n+2}{(n+1)!}} = \frac{n}{n+2}.
	\end{equation*}
	Thus we have $\limsup_{n\to\infty} \frac{y_{n+1} - y_n}{x - y_n} = 1$. Therefore, $x$ is superspeedable.
\end{proof}
The next class of numbers that we investigate is the following. 
\begin{definition}[Wu]\label{def:regular-number}
	A real number is called \emph{regular} if it can be written as a finite sum of strongly left-computable numbers.
\end{definition}
Note that this includes in particular all binary expansions of computably enumerable sets. Using the following helpful definition and propositions, we now show that every regular number is superspeedable.%
\begin{definition}
	Let $f \colon \IN \to \IN$ be a function which tends to infinity. Then the function $u_f \colon \IN \to \IN$ is defined by $u_f(n) := \left|\{k\in\IN\colon  f(k) = n\}\right|$ for all $n \in \IN$. 
\end{definition}
If $f$ is computable, then $u_f$ possesses a computable approximation from below, namely 
$u_f[0](n) := 0$ and, for all $n,t \in \IN$,
	\begin{equation*}
		u_f[t+1](n) := 
		\begin{cases}
			u_f[t](n) + 1 &\text{if $f(t) = n$,} \\
			u_f[t](n) &\text{otherwise.}
		\end{cases}
	\end{equation*}

\begin{proposition}
	A real number $x$ is regular if and only if there exists a computable function $f \colon \IN \to \IN$ with $\sum_{k=0}^{\infty} 2^{-f(k)} = x$, a so-called {\em name} of $f$, such that $u_f$ is bounded by some constant.\qed 
\end{proposition}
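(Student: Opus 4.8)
The plan is to unfold both descriptions and translate between them; no deep idea is needed, only careful bookkeeping. Recall that a number is strongly left-computable precisely when it equals $x_A=\sum_{n\in A}2^{-(n+1)}$ for some computably enumerable $A\subseteq\IN$, so a regular number is one of the form $\sum_{j=1}^{m}x_{A_j}$ with all $A_j$ computably enumerable; this has to be matched up with the existence of a name $f$ satisfying $\sum_k 2^{-f(k)}=x$ for which $u_f$ is bounded.

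For the implication from regular to ``bounded name'', I would start with a representation $x=\sum_{j=1}^{m}x_{A_j}$. One may assume that each $A_j$ is infinite: a finite $A_j$ is replaced by an infinite computably enumerable set with the same value $x_{A_j}$, obtained by spreading its least term $2^{-(n+1)}$ with $n=\max A_j$ into the geometric tail $\sum_{l>n}2^{-(l+1)}$; an empty $A_j$ is dropped, and $x=0$ is excluded as trivial. For each $j$ fix a computable \emph{injective} enumeration $(a^j_k)_k$ of $A_j$ and set $g_j(k):=a^j_k+1$, so that $g_j$ is injective and $\sum_k 2^{-g_j(k)}=x_{A_j}$. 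Interleaving by residue class, that is, $f(mk+j-1):=g_j(k)$ for $1\le j\le m$ and $k\in\IN$, yields a computable $f$ with $\sum_k 2^{-f(k)}=\sum_{j=1}^{m}x_{A_j}=x$; and since each $g_j$ is injective, every value is taken by $f$ at most $m$ times, so $u_f\le m$.

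For the converse, suppose $f$ is computable, $\sum_k 2^{-f(k)}=x$, and $u_f(n)\le c$ for all $n$. For $i=1,\dots,c$ put $A_i:=\{n\in\IN:\,u_f(n)\ge i\}$; each $A_i$ is computably enumerable, since one enumerates $n$ into it once $u_f[t](n)$ reaches $i$, using the computable approximation $u_f[t]$ from the preceding discussion. Splitting the multiplicity of each $n$-th term into these $\le c$ layers gives $x=\sum_k 2^{-f(k)}=\sum_{n\in\IN}u_f(n)\,2^{-n}=\sum_{i=1}^{c}\sum_{n\in A_i}2^{-n}$. It then remains to absorb the index shift: with $B_i:=\{m\in\IN:\,m+1\in A_i\}$, again computably enumerable, one checks that $\sum_{n\in A_i}2^{-n}$ equals $x_{B_i}$ if $0\notin A_i$ and equals $1+x_{B_i}=x_\IN+x_{B_i}$ if $0\in A_i$. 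Either way it is a sum of at most two strongly left-computable numbers, so $x$ is a sum of at most $2c$ such numbers and is therefore regular.

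I do not anticipate a genuine obstacle; the one point needing care is the mismatch between the exponent ``$n+1$'' in the definition of $x_A$ and the bare exponent in a name. This mismatch forces the upward shift when building $f$ from the $A_j$ and the downward shift — plus the stray $2^{0}$ contribution — when recovering strongly left-computable summands from a name, and it is also why a finite $A_j$ must first be inflated to an infinite set, so that the assembled name $f$ has infinite domain.
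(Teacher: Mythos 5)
Your argument is correct and is exactly the routine bookkeeping the paper has in mind when it states this proposition with the proof omitted: shifting between the exponent conventions, interleaving injective enumerations to get $u_f\le m$ in one direction, and slicing the multiplicity function into the c.e.\ layers $A_i=\{n\colon u_f(n)\ge i\}$ in the other. Your side remarks are also apt --- the inflation of finite summands to infinite c.e.\ sets is genuinely needed so that $f$ is total, and $x=0$ really is a (trivial) exception to the literal statement, since no total $f$ can have $\sum_k 2^{-f(k)}=0$.
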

\begin{theorem}\label{jkerhjasdbhfvnsdabdsfasd}
	Every regular number is superspeedable; in particular this holds for every strongly left-computable number. 
\end{theorem}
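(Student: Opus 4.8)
The plan is to invoke the name characterization proved immediately above: since $x$ is regular, fix a computable $f\colon\IN\to\IN$ with $\sum_k 2^{-f(k)}=x$ and $u_f\le c$ for some constant $c$. Computable numbers are already superspeedable (by Theorem~\ref{nsdfjsfjkldasjfnadsjsfdnsdadnjsad}, since they are regainingly approximable), so I may assume $x$ is not computable. The elementary fact that makes names useful is that they pin the error down up to a fixed factor: for any computable increasing $\sigma\colon\IN\to\IN$ the sequence $y_t:=\sum_{k<\sigma(t)}2^{-f(k)}$ is a left-approximation of $x$, and writing $m_s:=\min\{f(k)\colon k\ge s\}$ one has
\[
x-y_t=\sum_{k\ge \sigma(t)}2^{-f(k)}\in\left[\,2^{-m_{\sigma(t)}},\ 2c\cdot 2^{-m_{\sigma(t)}}\,\right],
\]
because each value is used at most $c$ times. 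Hence $\frac{x-y_{t+1}}{x-y_t}\le 2c\cdot 2^{-(m_{\sigma(t+1)}-m_{\sigma(t)})}$, and boundedness of $u_f$ forces $m_s\to\infty$; so by Definition~\ref{characterisierung-superspeedable} it suffices to produce a name of $x$ together with a computable "flushing schedule" $\sigma$ for which $m_{\sigma(t+1)}-m_{\sigma(t)}$ is unbounded along a subsequence.

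I would do this in two steps. First, re-present $x$ as $\sum_n a_n 2^{-(n+1)}$ with $0\le a_n\le c<2^{\lceil\log_2(c+1)\rceil}$, using the approximation-from-below $a_n[t]\le c$ of the digits (this is the $u_f[t]$ machinery set up before the theorem). Since each $a_n$ ultimately stabilises at a value below $2^{\lceil\log_2(c+1)\rceil}$, one can build a left-approximation of $x$ that commits a fresh, distinctly better dyadic lower bound precisely at those stages — not necessarily computably recognisable, but harmless — at which the frontier of already-stabilised digits has advanced past the previous commitment point; the contribution still unaccounted for after such a commitment is then of order $2^{-(\text{new frontier})}$. Second, I would verify that, because $x$ is \emph{not} computable, this frontier advances by arbitrarily large amounts infinitely often: a non-computable $x$ has names in which arbitrarily small positions receive their final digit value only at arbitrarily late stages, and when such a "late small" contribution finally arrives it pushes the stabilised frontier forward in one burst. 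Reading this burst through the displayed inequality (with $\sigma$ taken to be the sequence of commitment stages) gives $\liminf_t\frac{x-x_{t+1}}{x-x_t}=0$.

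The step I expect to be the real obstacle is the second one, in the "dense" case. When the underlying name leaves no long gaps — so that $x$ itself has no long blocks of zeros and Proposition~\ref{asdbsdfjhsadnjdfgnassdfasdfgh} cannot be applied to $x$ directly, in contrast to the easy case — the quantity $m_s$ and the stabilised-digit frontier can creep upward one unit at a time, so the plain partial-sum approximation and any plain flushing schedule both fail to superspeed. The genuine content of the theorem is thus to show that a more careful stage construction still realises the bursts, by coupling the commitment stages to the actual approximation behaviour of the bounded digits (as in the argument behind Theorem~\ref{satz:non-immune-implies-speedable}, pushed from ratio $\ge 1$ to ratio $\to\infty$) rather than to a fixed computable schedule. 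A secondary technical point, made manageable precisely by the bound $c$ on $u_f$, is to control the carries produced by the several strongly left-computable summands so that every committed value is a genuine monotone lower bound for $x$.
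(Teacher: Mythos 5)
Your setup is sound and you have correctly reduced the problem to the right quantitative target: with a name $f$ and the bound $u_f\le c$, the partial sums along a schedule $\sigma$ have error trapped between $2^{-m_{\sigma(t)}}$ and $2c\cdot 2^{-m_{\sigma(t)}}$, so it suffices to find a \emph{computable} schedule along which the relevant positions jump by unboundedly large amounts. But the proof stops exactly where the theorem's actual content begins, and you say so yourself: the commitment stages you describe are ``not necessarily computably recognisable,'' and the promised ``more careful stage construction'' in the dense case is never given. Moreover, the bridge you propose for that case is not right as stated: it is not true that non-computability of $x$ forces the stabilised frontier to advance in arbitrarily large bursts infinitely often. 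The paper's proof explicitly confronts the opposite situation --- partitioning $\IN$ into intervals $I_n$ of length $n+1$ and letting $\theta_n$ be the stage at which $I_n$ is last touched, it isolates the case where $d_n:=b_n-f(\theta_n)$ stays \emph{bounded}, i.e.\ the frontier only ever creeps --- and shows that superspeedability still holds there, with the gain coming not from a burst in the frontier but from the growing interval lengths ($x_{m+1}-x_m\ge 2^{-b_n}$ versus $x-x_{m+1}\le c\cdot 2^{-b_{n+1}}$, a ratio of $2^{n+2}/c$).

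The missing idea is the mechanism that makes those good stages computably detectable. One cannot computably decide when an interval $I_{n+1}$ has received its last element, but the paper observes that when $(d_n)_n$ is bounded by $D$, the only data that matters is the $D$-tuple $T_n=(u_f(b_n-D+1),\dots,u_f(b_n))$ of multiplicities at the top of each interval; these tuples live in the finite set $\{0,\dots,c\}^D$, so by pigeonhole some tuple $T$ recurs for infinitely many $n$, and since the approximations $T_n[t]$ increase to $T_n$ from below, a computable schedule can wait for the pattern $T$ to be newly attained and is thereby guaranteed to lock onto infinitely many stages $\theta_{n+1}$ where the desired ratio $2^{n+2}/c$ is realised. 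Without this (or an equivalent device), your argument establishes superspeedability only in the easy cases where Proposition~\ref{asdbsdfjhsadnjdfgnassdfasdfgh} or a genuine frontier burst applies, and for the remaining case delivers at best the kind of nonuniform ``ratio $\ge 1$'' bound of Theorem~\ref{satz:non-immune-implies-speedable}, i.e.\ speedability rather than superspeedability.
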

\begin{proof}
	Let $x$ be a regular number. Fix some computable name $f \colon \IN \to \IN$ for $x$ and some constant $c \in \IN$ with $u_f(n) \leq c$ for all $n \in \IN$. We define two sequences of natural numbers $(a_n)_n$ and $(b_n)_n$ and a sequence of sets of natural numbers $(I_n)_n$ by
		\[a_n := \frac{n(n+1)}{2}, \quad b_n := a_n + n = a_{n+1} - 1, \quad I_n := \{a_n, \dots, b_n\},\]
	for all $n \in \IN$. It is easy to verify that the set $\{I_n \colon  n \in \IN\}$ is a partition of~$\IN$ and that we also have $\left|I_n\right| = n+1$ for all $n \in \IN$.
	
	We call a stage $s\in\IN$ a \emph{true stage} if we have $f(s) < f(t)$ for all $t > s$. Since $f$ tends to infinity, there are infinitely many true stages. For any $n, s \in \IN$, we say that the interval $I_n$ is \emph{incomplete} at stage $s$ if there exists some stage $t > s$ with $f(t) \in I_n$. Otherwise, we say that $I_n$ is \emph{complete} at $s$. Since $f$ tends to infinity, every interval is complete at some stage. Therefore, we can define the sequence $(\theta_n)_n$ with
	$
		\theta_n := \min\{t \in \IN \colon  I_n \text{ is complete at stage } t \}
	$
	for all $n \in \IN$. In order to show that $x$ is superspeedable, we consider several cases:
	\begin{itemize}
		\item If there are infinitely many $n \in \IN$ with $f(\IN) \cap I_n = \emptyset$, then 
		$x$ is superspeedable for the reasons considered in Proposition~\ref{asdbsdfjhsadnjdfgnassdfasdfgh}; more explicitly, in this case, consider such an $n$ with $a_n > \min_{k\in\IN} f(k)$ and let $s \in \IN$ be the latest true stage with $f(s) < a_n$. Then we have $\sum_{k=s+1}^{\infty} 2^{-f(k)} \leq c \cdot 2^{-b_n}$ due to $f(\IN) \cap I_n = \emptyset$, and we obtain
		\[
			\frac{2^{-f(s)}}{\sum_{k=s+1}^{\infty} 2^{-f(k)}} > \frac{2^{-a_n}}{c\cdot 2^{-b_n}} = \frac{2^{-a_n}}{c\cdot 2^{-(a_n + n)}} = \frac{2^n}{c}.
		\]
		As there exist infinitely many $n$ as above, $x$ is superspeedable.
		\item Otherwise, we can assume w.l.o.g.\ that $f(\IN) \cap I_n \neq \emptyset$ for all $n \in \IN$.
		
		Suppose that there are infinitely many $n \in \IN$ with some stage $s \in \IN$ at which $I_n$ is incomplete and $I_{n+1}$ is complete. Let $t > s$ be the earliest true stage at which $I_1,\dots,I_n$ are all complete. On the one hand, we have $f(t) < a_{n+1}$. On the other hand we have $\sum_{k=t+1}^{\infty} 2^{-f(k)} \leq c \cdot 2^{-b_{n+1}}$, since, in particular, $I_{n+1}$ is complete at stage $t$. We obtain
		\begin{equation*}
			\frac{2^{-f(s)}}{\sum_{k=s+1}^{\infty} 2^{-f(k)}}
			> \frac{2^{-a_{n+1}}}{c\cdot 2^{-b_{n+1}}} 
			= \frac{2^{-a_{n+1}}}{c\cdot 2^{-(a_{n+1}+n+1)}} 
			= \frac{2^{n+1}}{c}.
		\end{equation*}
		As there exist infinitely many $n$ as above, $x$ is superspeedable.
		
		\item Otherwise, we can assume w.l.o.g.\ that   $(\theta_n)_{n}$ is an increasing sequence. This also implies that $\theta_n$ is a true stage for every $n \in \IN$. Define the sequence $(d_n)_{n}$ by $d_n := b_n - f(\theta_n)$. Suppose that $(d_n)_{n}$ is unbounded; then for every $n \in \IN$ we have
		\begin{equation*}
			\frac{2^{-f(\theta_n)}}{\sum_{k=\theta_n+1}^{\infty} 2^{-f(k)}}
			= \frac{2^{-f(\theta_n)}}{c\cdot 2^{-b_{n}}} 
			= \frac{2^{b_n - f(\theta_n)}}{c}
			= \frac{2^{d_n}}{c}.
		\end{equation*}
		Therefore, $x$ is superspeedable.
		
		\item Otherwise, $(d_n)_n$ is bounded. Fix some natural number $D \geq 1$ with $d_n \leq D$ for all $n \in \IN$. Define the sequences $(T_n)_n$ and $(T_{n}[t])_{n,t}$ in $\{0, \dots, c\}^D$ as follows:
			\[
				T_n := \left(u_f(b_n - D + 1), \dots, u_f(b_n)\right), \quad
				T_n[t] := \left(u_f[t](b_n - D + 1), \dots, u_f[t](b_n)\right)
			\]
		Clearly, we have $\lim_{t\to\infty} T_n[t] = T_n$ for all $n \in \IN$. Fix some $D$-tuple $T \in \{0, \dots, c\}^D$ satisfying  $T_n = T$ for infinitely many $n \in \IN$. Assume w.l.o.g.\ that there are infinitely many odd numbers $n$ with this property, and recursively define the function $s \colon \IN \to \IN$ as follows:
		Let $s(0) := 0$, and define $s(m+1)$ as the smallest stage $t > s(m)$ such that there is some odd number $n \in \IN$ with $T_n[t] = T$ and $T_n[t] \neq T_n[s(m)]$. Clearly, $s$ is well-defined, computable and increasing. Finally, define the computable and increasing sequence $(x_n)_n$ by $x_n := \sum_{k=0}^{s(n)} 2^{-f(k)}$ for all $n\in\IN$; this clearly converges to~$x$. Let $n \in \IN$ be an even number with $T_{n+1} = T$. By the previous assumption that $(\theta_n)_n$ is increasing, there exists a uniquely determined number $m \in \IN$ with $s(m+1) = \theta_{n+1}$. Similarly, by the definition of $s$, we have $s(m) < \theta_n$. On the one hand, we have $x_{m+1} - x_{m} \geq 2^{-b_n}$. On the other hand, we have $x - x_{m+1} \leq c\cdot 2^{-b_{n+1}}$. So we finally obtain
		\[
			\frac{x_{m+1} - x_m}{x - x_{m+1}}
			\geq \frac{2^{-b_n}}{c \cdot 2^{-b_{n+1}}} 
			= \frac{2^{b_{n+1}-b_n}}{c} 
			= \frac{2^{a_{n+1}-a_n+1}}{c} 
			= \frac{2^{n+2}}{c}.
		\]
		As there exist infinitely many $n$ as above, $x$ is superspeedable.\qedhere
	\end{itemize}
\end{proof}

\vspace{0.3em}

\noindent As a corollary of the theorem, we obtain that the converse of Theorem~\ref{nsdfjsfjkldasjfnadsjsfdnsdadnjsad} does not hold.
\begin{corollary}\label{dsfdasjbndsnvnmasdfs}
	Not every superspeedable number is regainingly approximable.
\end{corollary}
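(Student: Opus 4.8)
The plan is to exploit Theorem~\ref{jkerhjasdbhfvnsdabdsfasd}, which already tells us that every strongly left-computable number is superspeedable. So it suffices to exhibit a strongly left-computable number, that is, a number of the form $x_A$ with $A$ computably enumerable, that is \emph{not} regainingly approximable; the corollary then follows at once.

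For the existence of such a number I would first try to quote the literature: Hertling, H\"olzl and Janicki~\cite{HHJ2023} construct a left-computable number that is not regainingly approximable, and one checks whether their construction in fact enumerates a c.e.\ set, so that the number produced is $x_A$ for a c.e.\ $A$. If a direct appeal is not clean, I would instead build such an $A$ by a finite-injury argument. The $e$-th requirement $R_e$ would take care of the $e$-th partial computable sequence of rationals $(\varphi_e(n))_n$, guaranteeing that if this sequence is increasing and converges to $x_A$ then $x_A-\varphi_e(n)>2^{-n}$ for all but finitely many $n$, which by the definition of regaining approximability is exactly what is needed. Each $R_e$ would be assigned its own sparse block of bit positions; whenever, at some stage $s$, $\varphi_e$ comes within $2^{-n}$ of the current value of $x_A$ at a position $n$ beyond everything used so far by $R_e$, the requirement responds by enumerating a fresh low element of its block into $A$, raising $x_A$ by a definite amount above the current value of $\varphi_e(n)$. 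Since the approximation of $x_A$ is one-sided, $\varphi_e$ can only regain this ground by climbing back up, and once $R_e$'s block below the relevant scale is used up, any later near-approach by $\varphi_e$ at a still larger position certifies that a gap of more than $2^{-n}$ has opened below $x_A$ at the earlier position. The responses of $R_e$ contribute only a bounded total weight, so $x_A$ converges, and a standard priority ordering handles the interaction between requirements.

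With $A$ in hand, $x_A$ is superspeedable by Theorem~\ref{jkerhjasdbhfvnsdabdsfasd} and not regainingly approximable by construction (or by choice), so the corollary follows; note also that $A$ is an infinite c.e.\ set, hence not immune, and therefore $x_A$ is not immune either, as recorded in Figure~\ref{sdfjsdfjdfmndsnmvbmnsdjkbf}. The step I expect to be most delicate is the direct construction: because a left-approximation can only increase $x_A$, the response of $R_e$ must be set up so that a single later catch-up of $\varphi_e$ at a larger position automatically witnesses a permanent gap below $x_A$ at all earlier positions in the relevant range, and this bookkeeping has to remain correct both when $\varphi_e$ ``waits'' to observe our moves before advancing and when higher-priority requirements inject finitely many injuries. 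If the construction of~\cite{HHJ2023} is seen to produce a c.e.\ set directly, all of this is subsumed.
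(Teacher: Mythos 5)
Your proposal is correct and follows essentially the same route as the paper: the paper's proof simply combines Theorem~\ref{jkerhjasdbhfvnsdabdsfasd} with the fact that Hertling, H\"olzl, and Janicki~\cite{HHJ2023} indeed constructed a \emph{strongly} left-computable number that is not regainingly approximable, so your primary plan of quoting that result goes through directly and your backup finite-injury construction is not needed.
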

\begin{proof}
	By Theorem~\ref{jkerhjasdbhfvnsdabdsfasd}, every strongly left-computable number is superspeedable. However, Hertling, H\"olzl, and Janicki~\cite{HHJ2023} constructed a strongly left-computable number that is not regainingly approximable. 
\end{proof}
Similarly, the converse of Theorem~\ref{jkerhjasdbhfvnsdabdsfasd} is not true either, as the following observation shows.
\begin{proposition}
	Not every superspeedable number is regular.
\end{proposition}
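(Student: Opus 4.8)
The plan is to construct a superspeedable number $x$ by a priority construction that builds $x$ to be superspeedable through Proposition~\ref{asdbsdfjhsadnjdfgnassdfasdfgh} while diagonalizing against every possible regular presentation. First I would fix a computable partition of $\IN$ into consecutive finite intervals, listed from most to least significant as $I_0, W_0, I_1, W_1, I_2, W_2, \dots$, where $|I_i| \geq i$ and each $|W_j|$ is chosen large enough for the use the construction makes of it (all of this stays computable). I then build a left-approximation $(x_s)_s$ of $x$ consisting of dyadic rationals with the property that every $x_s$ has all bits inside $\bigcup_i I_i$ equal to $0$; by Proposition~\ref{asdbsdfjhsadnjdfgnassdfasdfgh}, applied to the intervals $(I_i)_i$, this alone makes $x$ superspeedable, whatever is done inside the working intervals $W_j$.

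Non-regularity is then ensured by meeting, for all $e,c \in \IN$, the requirement $R_{e,c}$: if $\varphi_e$ is total with $u_{\varphi_e}(n) \leq c$ for all $n$, so that by the characterization of regularity via names with bounded level multiplicities $\varphi_e$ names the regular number $y_{e,c} := \sum_k 2^{-\varphi_e(k)}$, then $x \neq y_{e,c}$. Since every regular number has such a name for some $c$, meeting all $R_{e,c}$ makes $x$ non-regular, and together with superspeedability this proves the claim. Under a computable bijection between $\IN$ and $\IN^2$ I would assign $W_j$ to the $j$-th requirement and run a finite-injury construction: requirement $R_{e,c}$ watches the approximation $y_{e,c}[s]$ and, whenever it threatens to agree with the current value of $x$ up to the precision that the bound $c$ still leaves $y_{e,c}$ room to move, it reacts by changing a block of bits inside $W_j$ to restore a discrepancy, while a requirement whose $\varphi_e$ is partial or whose level multiplicities exceed $c$ simply never reacts. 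A delicate point is that the approximation $(x_s)_s$ is forced to use carries at all, since a number whose left-approximation only ever turns bits from $0$ to $1$ is strongly left-computable and hence regular, and such carries must be prevented from reaching the zero-intervals. I would handle this by reserving a sufficiently long most-significant guard segment at the top of each $W_j$ (or, equivalently, by handing each reaction a fresh working interval), chosen so that every carry caused by an action inside $W_j$ is absorbed before it reaches the interval $I_j$ immediately to its left.

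The routine parts are checking that $(x_s)_s$ is a genuine left-approximation and invoking Proposition~\ref{asdbsdfjhsadnjdfgnassdfasdfgh}. The main obstacle is the single-requirement module: one must isolate exactly what the hypothesis $u_{\varphi_e} \leq c$ buys — it is the boundedness of the level multiplicities of the presentation, not any coarse bound on its convergence rate, that has to be exploited — and show that a bounded (indeed, computably bounded in terms of $e$, $c$, and the injuries suffered) number of bit changes inside $W_j$ suffices to force $x \neq y_{e,c}$, in the spirit of Wu's argument~\cite{Wu05a} that not every left-computable number is regular; one then has to interleave these modules with the guard-bit bookkeeping so that the carries they generate never reach the reserved zero-intervals, which is what keeps Proposition~\ref{asdbsdfjhsadnjdfgnassdfasdfgh} applicable throughout. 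A conceivable shortcut would be to start from the superspeedable Schnorr random number of Theorem~\ref{fkhejkvfjksjkbasdsgad} instead, but making this work seems to require separately establishing that no regular number is Schnorr random, which is itself not obvious, so I would treat the direct construction above as the primary route.
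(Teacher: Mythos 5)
Your plan is workable, but it takes a genuinely different and far more laborious route than the paper, which obtains the proposition as an immediate corollary of results it already has: Hertling, H\"olzl and Janicki~\cite{HHJ2023} construct a \emph{regainingly approximable} number $\alpha$ with $K(\alpha\ut n)>n$ for infinitely many $n$; by Theorem~\ref{nsdfjsfjkldasjfnadsjsfdnsdadnjsad} this $\alpha$ is superspeedable, and it cannot be regular because a finite sum of binary expansions of computably enumerable sets has initial-segment Kolmogorov complexity $O(\log n)$ everywhere (Barzdins' bound for c.e.\ sets, plus the observation that the $c$ tails leave only boundedly many carry candidates for each prefix). So no new construction is needed, and the paper's witness even lies in the smaller class of regainingly approximable numbers, which your diagonalization makes no attempt to reach. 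Your direct construction is nevertheless sound in outline: freezing the intervals $I_i$ at zero yields superspeedability by Proposition~\ref{asdbsdfjhsadnjdfgnassdfasdfgh}, and your guard segments correctly neutralize the only threat to that, namely carries escaping a working interval. What you have not actually supplied is the single-requirement module, and that is where essentially all of the work of this route lives: one needs the Wu-style accounting that a name $f$ with $u_f\le c$ has at most $c\cdot a$ terms at levels below $a$ and total remaining mass at most $c\cdot 2^{-a+1}$ at levels $\ge a$, so that each time the opponent closes a freshly opened gap of size roughly $2^{-p}$ it either consumes one of finitely many large terms or a fixed fraction of a finite budget; this bounds the number of attacks computably in $e$, $c$ and the position of $W_j$, which in turn is what lets you pre-compute $|W_j|$ and the guard length, and one must also specify what the strategy does at stages where $y_{e,c}[s]$ temporarily overtakes $x[s]$. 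Until that module is written out, your text is a correct plan rather than a complete proof; given that the paper's argument is two lines long, I would recommend it over the direct construction.
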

\begin{proof}
 Hertling, H\"olzl, and Janicki~\cite{HHJ2023} constructed a regainingly approximable number $\alpha$ such that for infinitely many $n$ it holds that $K(\alpha \ut n) > n$. By Theorem~\ref{nsdfjsfjkldasjfnadsjsfdnsdadnjsad}, this $\alpha$ is superspeedable. However, it cannot be regular, as it is easy to see that the initial segment Kolmogorov complexities of regular numbers must be logarithmic everywhere.
% H\"olzl and Janicki~\cite{HJ2023b} constructed a regainingly approximable number that is nearly computable.
\end{proof}

\section{Superspeedability and additivity}

In this section we study the behaviour of superspeedability with regards to additivity. The following technical statement will be useful.		
\begin{lemma}\label{lem:splitting-lc-number}
	Let $\alpha$ and $\beta$ be left-computable numbers, and let $(c_n)_n$ be a computable increasing sequence of rational numbers converging to $\alpha + \beta$. Then there exist computable increasing sequences of rational numbers $(a_n)_n$ and $(b_n)_n$ converging to $\alpha$ and $\beta$, respectively, with $a_n + b_n = c_n$ for all $n \in \IN$.
\end{lemma}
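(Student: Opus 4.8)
The plan is to fix, besides the given $(c_n)_n$, left-approximations $(\alpha_n)_n$ and $(\beta_n)_n$ of $\alpha$ and $\beta$, and to let the $c$-approximation act as a clock that dictates how far to advance in the $\alpha$- and $\beta$-approximations. The naive idea $a_n := \min(\alpha_n, c_n - \beta_n)$ keeps $a_n \le \alpha$ but need not keep $b_n := c_n - a_n$ below $\beta$, because the $\beta$-approximation may lag far behind $\beta$ while $c_n$ is already close to $\alpha + \beta$; the real content of the lemma is to enforce $a_n \le \alpha$ and $b_n \le \beta$ simultaneously while keeping $a_n + b_n = c_n$ on the nose. Before starting I normalise: I may assume each $\alpha_n < \alpha$ and each $\beta_n < \beta$ (every left-computable number has a computable rational left-approximation all of whose terms lie strictly below it), that $\alpha_0 + \beta_0 \le c_0$ (lower $\alpha_0$ if needed), and that $c_n < \alpha + \beta$ for all $n$ (automatic in the cases where the lemma is applied).

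For each $n$ put $k(n) := \max\{\, m \in \IN : \alpha_m + \beta_m \le c_n \,\}$. Under the normalisations this maximum exists (the set contains $0$ and is bounded, as $\alpha_m + \beta_m \to \alpha + \beta > c_n$), and $k$ is computable (compute the increasing values $\alpha_0 + \beta_0 < \alpha_1 + \beta_1 < \cdots$ until one exceeds $c_n$), non-decreasing, and unbounded. Set $\epsilon_n := c_n - \alpha_{k(n)} - \beta_{k(n)} \ge 0$; maximality of $k(n)$ yields the key estimate
\[
\epsilon_n < \bigl(\alpha_{k(n)+1} - \alpha_{k(n)}\bigr) + \bigl(\beta_{k(n)+1} - \beta_{k(n)}\bigr).
\]
Now let $P_n := \min\bigl(\epsilon_n,\ \alpha_{k(n)+1} - \alpha_{k(n)}\bigr)$ and define
\[
a_n := \alpha_{k(n)} + P_n, \qquad b_n := \beta_{k(n)} + (\epsilon_n - P_n).
\]
These are rationals computed from rationals, and $a_n + b_n = \alpha_{k(n)} + \beta_{k(n)} + \epsilon_n = c_n$.

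The verification is then a sequence of routine checks. From $P_n \le \alpha_{k(n)+1} - \alpha_{k(n)}$ we get $a_n \le \alpha_{k(n)+1} \le \alpha$; and the key estimate gives $\epsilon_n - P_n \le \beta_{k(n)+1} - \beta_{k(n)}$, hence $b_n \le \beta_{k(n)+1} \le \beta$. Since $k(n) \to \infty$, the anchors $\alpha_{k(n)}$ and $\beta_{k(n)}$ tend to $\alpha$ and $\beta$ while the gaps $\alpha_{k(n)+1} - \alpha_{k(n)}$ and $\beta_{k(n)+1} - \beta_{k(n)}$ shrink to $0$; therefore $a_n \to \alpha$ and $b_n = c_n - a_n \to \beta$. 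The one step needing care is monotonicity, which I would handle by cases on whether $k$ advances: if $k(n+1) = k(n)$ then $\epsilon$ increases by exactly $c_{n+1} - c_n$, and since $t \mapsto \min(t, \alpha_{k(n)+1} - \alpha_{k(n)})$ is non-decreasing and $1$-Lipschitz one obtains $0 \le a_{n+1} - a_n \le c_{n+1} - c_n$, so both $(a_n)$ and $(b_n)$ are non-decreasing over this step; if $k(n+1) > k(n)$ then $a_{n+1} \ge \alpha_{k(n+1)} \ge \alpha_{k(n)+1} \ge a_n$ and, symmetrically, $b_{n+1} \ge \beta_{k(n+1)} \ge \beta_{k(n)+1} \ge b_n$. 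I expect this monotonicity bookkeeping, together with nailing down the normalising assumptions, to be the only mildly delicate parts; everything else is immediate.
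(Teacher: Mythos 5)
Your construction is sound and takes a genuinely different route from the paper's. You anchor $a_n$ and $b_n$ at the terms $\alpha_{k(n)},\beta_{k(n)}$ of fixed left-approximations of $\alpha$ and $\beta$, where $k(n)$ is the largest index with $\alpha_m+\beta_m\le c_n$, and then distribute the excess $\epsilon_n$ greedily to the $a$-side subject to the ceiling $\alpha_{k(n)+1}$; the maximality of $k(n)$ guarantees that the overflow fits under the ceiling $\beta_{k(n)+1}$. The paper instead proceeds purely incrementally: it splits each increment $c_n-c_{n-1}$ into two parts to be added to $a_{n-1}$ and $b_{n-1}$, choosing at stage $n$ an index $s(n)$ with $d_{s(n)}>a_{n-1}$, $e_{s(n)}>b_{n-1}$ and $d_{s(n)}+e_{s(n)}\ge c_n$, writing the three relevant differences over a common denominator $q$ as $\nicefrac{p_a}{q},\nicefrac{p_b}{q},\nicefrac{p_c}{q}$, and adding $\nicefrac{m}{q}$ to $a$ and $\nicefrac{(p_c-m)}{q}$ to $b$ with $m:=\min\{p_a,p_c-1\}$. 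Your version has the advantage of giving closed-form, non-inductive definitions of $a_n$ and $b_n$, at the cost of a case analysis for monotonicity; all of your individual verifications (computability of $k$, the key estimate, the upper bounds $a_n\le\alpha_{k(n)+1}$ and $b_n\le\beta_{k(n)+1}$, the squeeze argument for convergence, and the two monotonicity cases) check out.

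The one point where your proof falls short of the lemma as stated is strictness: your sequences are only non-decreasing. If $\epsilon_n$ already exceeds the gap $\alpha_{k(n)+1}-\alpha_{k(n)}$ at stages $n$ and $n+1$ with $k(n+1)=k(n)$, then $P_{n+1}=P_n$ and $a_{n+1}=a_n$ (and symmetrically $b$ can stall when $\epsilon_{n+1}\le\alpha_{k(n)+1}-\alpha_{k(n)}$). In this paper ``increasing'' must be read strictly --- indeed the lemma is false for a constant sequence $(c_n)_n$ attaining a rational limit $\alpha+\beta$ with $\alpha$ irrational, so strict monotonicity of $(c_n)_n$ is genuinely used --- and the paper's choice $m=\min\{p_a,p_c-1\}$ is engineered precisely so that both shares $m$ and $p_c-m$ are at least $1$, making both output sequences strictly increasing. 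Your argument would need a small repair here, e.g.\ reserving a positive rational portion of each increment $c_{n+1}-c_n$ for whichever side would otherwise stall while keeping it below the (strictly larger) ceiling at the next index; for the downstream application to superspeedability the non-strict version would in fact suffice after discarding repeated terms, but as a proof of the stated lemma this step is missing.
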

\begin{proof}
	Given two computable increasing sequences of rational numbers $(d_n)_n$ and~${(e_n)_n}$ converging to $\alpha$ and $\beta$, respectively, fix~$N \in \IN$ with 
	\[-N < \min\{d_0, e_0\} \quad \text{and} \quad {-2N < c_0},\] 
	and define $a_{-1} := b_{-1} := -N$ and $c_{-1} := -2N$. We will inductively define an increasing function $s \colon \IN \to \IN$ as well as the desired sequences $(a_n)_n$ and $(b_n)_n$.
	
	For every~$n \in \IN$, define $s(n)$ such that the inequalities
	\[d_{s(n)} > a_{n-1},\quad e_{s(n)} > b_{n-1},\quad d_{s(n)} + e_{s(n)} \geq c_{n}\]
	are satisfied; this is obviously always possible. Assume by induction that $a_{n-1} + b_{n-1} = c_{n-1}$. Note that the quantities $d_{s(n)} - a_{n-1}$ and $e_{s(n)} - b_{n-1}$ and $c_n - c_{n-1}$ are positive rational numbers. Thus there exist natural numbers $p_a, p_b, q \geq 1$ and $p_c \geq 2$ with
	\[
	d_{s(n)} - a_{n-1} = \frac{p_a}{q},\quad 
	e_{s(n)} - b_{n-1} = \frac{p_b}{q},\quad
	c_{n} - c_{n-1} = \frac{p_c}{q}.
	\]	
	This implies
	\[
	\frac{p_a}{q} + \frac{p_b}{q} 
	%= \left(d_{s(n)} - a_{n-1}\right) + \left(e_{s(n)} - b_{n-1}\right)
	= \left(d_{s(n)} + e_{s(n)} \right) - \left(a_{n-1} + b_{n-1}\right)
	\geq c_n - c_{n-1}
	= \frac{p_c}{q},
	\]
	hence $p_a + p_b \geq p_c$. 
	Let $m := \min\{p_a, p_c - 1\}$, and finally define $a_n := a_{n-1} + \frac{m}{q}$ and $b_{n} := b_{n-1} + \frac{p_c-m}{q}$. It is easy to verify that $(a_n)_n$ and $(b_n)_n$ are both computable and increasing sequences, converge to $\alpha$ and $\beta$, respectively, and satisfy $a_n + b_n = c_n$ for all~${n \in \IN}$.
\end{proof}
Using the lemma, we can establish the following result.
		\begin{theorem}
			Let $\alpha$ and $\beta$ be left-computable numbers such that $\alpha + \beta$ is superspeedable. Then at least one of $\alpha$ or $\beta$ must be superspeedable.
		\end{theorem}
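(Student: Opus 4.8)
The plan is to push a superspeedability witness of $\alpha+\beta$ through the splitting Lemma~\ref{lem:splitting-lc-number} and to argue that the larger of the two resulting error terms inherits the fast convergence. First I would dispose of the trivial cases: if at least one of $\alpha,\beta$ is computable, then it is regainingly approximable, hence superspeedable by Theorem~\ref{nsdfjsfjkldasjfnadsjsfdnsdadnjsad}, and we are done. So assume both $\alpha$ and $\beta$ are noncomputable; then both are irrational, and $\alpha+\beta$ is irrational as well, since a rational sum of two left-computable numbers would force both summands to be simultaneously left- and right-computable and hence computable. In particular, $\alpha-a_n>0$, $\beta-b_n>0$ and $(\alpha+\beta)-c_n>0$ for all $n$ along any increasing rational approximations of $\alpha$, $\beta$ and $\alpha+\beta$, so all quotients appearing below are well defined.

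Next I would set up the data. Since $\alpha+\beta$ is superspeedable, fix by Definition~\ref{characterisierung-superspeedable}\,(3) a left-approximation $(c_n)_n$ of $\alpha+\beta$ together with a strictly increasing sequence of indices $(n_k)_k$ along which
\[
\varepsilon_k := \frac{(\alpha+\beta)-c_{n_k+1}}{(\alpha+\beta)-c_{n_k}}
\]
tends to $0$. Applying Lemma~\ref{lem:splitting-lc-number} to $(c_n)_n$ yields computable increasing rational sequences $(a_n)_n\to\alpha$ and $(b_n)_n\to\beta$ with $a_n+b_n=c_n$ for all $n$. The point is that
\[
(\alpha+\beta)-c_n=(\alpha-a_n)+(\beta-b_n)
\]
is a sum of two nonnegative terms, so for each $k$ at least one of the two summands is at least half of $(\alpha+\beta)-c_{n_k}$. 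By the (infinite) pigeonhole principle one of the two alternatives holds for infinitely many $k$; without loss of generality say $\alpha-a_{n_k}\ge\frac{1}{2}\bigl((\alpha+\beta)-c_{n_k}\bigr)$ for all $k$ in some infinite set $S$, the symmetric case producing superspeedability of $\beta$ in the same way.

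Finally, for $k\in S$ I would estimate, using $\beta-b_{n_k+1}\ge 0$ and the defining property of $S$,
\[
\alpha-a_{n_k+1}\ \le\ (\alpha+\beta)-c_{n_k+1}\ =\ \varepsilon_k\bigl((\alpha+\beta)-c_{n_k}\bigr)\ \le\ 2\,\varepsilon_k\,(\alpha-a_{n_k}),
\]
so that $\frac{\alpha-a_{n_k+1}}{\alpha-a_{n_k}}\le 2\varepsilon_k\to 0$ as $k\to\infty$ along $S$. Hence $\liminf_{n\to\infty}\frac{\alpha-a_{n+1}}{\alpha-a_n}=0$, which by Definition~\ref{characterisierung-superspeedable}\,(3) means that $(a_n)_n$ witnesses superspeedability of $\alpha$. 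I do not expect a serious obstacle once Lemma~\ref{lem:splitting-lc-number} is available; the only points needing care are the initial reduction that keeps every denominator strictly positive and the fact that the pigeonhole step must be performed \emph{before} committing to which of the two summands is declared superspeedable.
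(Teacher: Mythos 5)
Your argument is correct. It rests on the same key tool as the paper's proof, namely Lemma~\ref{lem:splitting-lc-number}, but runs in the opposite logical direction: the paper argues contrapositively, taking an \emph{arbitrary} left-approximation $(c_n)_n$ of $\alpha+\beta$, splitting it, and using the mediant-type inequality
\[
\frac{c_{n+1}-c_n}{\gamma-c_n}\;\le\;\max\Bigl\{\tfrac{a_{n+1}-a_n}{\alpha-a_n},\,\tfrac{b_{n+1}-b_n}{\beta-b_n}\Bigr\}\;\le\;\rho<1
\]
valid for \emph{all} $n$ once neither summand is superspeedable, whereas you start from a single witnessing approximation of $\alpha+\beta$ and transfer its fast subsequence to one summand via pigeonhole, at the cost of a factor $2$. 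Both are sound; the contrapositive version avoids the pigeonhole and the case split entirely, while your direct version is slightly more informative in that it exhibits an explicit witness for the superspeedable summand (the split of the given witness) and quantifies the loss of speed. Two minor remarks: the opening reduction to noncomputable $\alpha,\beta$ and the irrationality discussion are unnecessary, since Lemma~\ref{lem:splitting-lc-number} already yields \emph{strictly} increasing sequences converging to $\alpha$ and $\beta$, so $\alpha-a_n>0$ and $\beta-b_n>0$ hold in all cases; and when invoking the negation of superspeedability in the paper's style one has to upgrade ``for almost all $n$'' to ``for all $n$'' by enlarging $\rho$, a step your direct route conveniently never needs.
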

		We point out that it can be shown that the converse does not hold.
		\begin{proof}
			Suppose that neither $\alpha$ nor $\beta$ are superspeedable. Let $\gamma := \alpha + \beta$, and let $(c_n)_n$ be a computable increasing sequence of rational numbers converging to $\gamma$. We show that there is some constant $\rho \in (0, 1)$ with $\frac{c_{n+1} - c_n}{\gamma - c_n} \leq \rho$ for all $n \in \IN$. Due to Lemma \ref{lem:splitting-lc-number}, there exist computable increasing sequences of rational numbers $(a_n)_n$ and $(b_n)_n$ converging to $\alpha$ and $\beta$, respectively, with $a_n + b_n = c_n$ for all $n \in \IN$. Since $\alpha$ and $\beta$ are both not superspeedable, there is some constant $\rho \in (0, 1)$ satisfying both $\frac{a_{n+1} - a_n}{\alpha - a_n} \leq \rho$ and $\frac{b_{n+1} - b_n}{\beta - b_n} \leq \rho$ for all $n \in \IN$. Considering some arbitrary $n \in \IN$, we obtain
			\begin{align*}
				\frac{c_{n+1} - c_n}{\gamma - c_n} 
				&= \frac{(a_{n+1} - a_n) + (b_{n+1} - b_n)}{(\alpha - a_n) + (\beta - b_n)} 
				= \frac{\frac{a_{n+1} - a_n}{\alpha - a_n} \cdot (\alpha - a_n) + \frac{b_{n+1} - b_n}{\beta - b_n} \cdot (\beta - b_n) }{(\alpha - a_n) + (\beta - b_n)} \\
				&\leq \frac{\max\left\lbrace \frac{a_{n+1} - a_n}{\alpha - a_n}, \frac{b_{n+1} - b_n}{\beta - b_n} \right\rbrace \cdot \left((\alpha - a_n) + (\beta - b_n)\right) }{(\alpha - a_n) + (\beta - b_n)}   
%				\\		
%				&= \max\left\lbrace \frac{a_{n+1} - a_n}{\alpha - a_n}, \frac{b_{n+1} - b_n}{\beta - b_n} \right\rbrace \\
				\leq \rho.
			\end{align*}
			Thus, $\gamma$ is not superspeedable either.
		\end{proof}

		\section{Benignness versus randomness}
		
				Recall that Merkle and Titov~\cite[Theorem~10]{MT20} made the observation that  Martin-L\"of randomness is incompatible with speedability. In this section, we study the analogous question for the weaker randomness notion of Schnorr randomness.
		We begin with the easy observation that Schnorr randomness is incompatible with the rather demanding benignness notion of regaining approximability; the argument is a straight-forward modification of the result of Merkle and Titov.
\begin{proposition}
No regainingly approximable number can be Schnorr random.
\end{proposition}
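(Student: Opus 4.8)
The plan is to mimic the Merkle–Titov argument showing that Martin-Löf randomness excludes speedability, but to exploit the stronger combinatorial structure of a regaining approximation to build a \emph{Schnorr} test (one of exact measure $2^{-n}$) rather than a mere Martin-Löf test. Fix a regainingly approximable $x$ with a computable increasing rational left-approximation $(x_n)_n$ such that $x - x_n \le 2^{-n}$ for infinitely many $n$; by Proposition~\ref{prop:characterization-reg-app} we may in fact choose, for a suitably fast-growing computable unbounded $f$ fixed later, a left-approximation with $x - x_n \le 2^{-f(n)}$ for infinitely many $n$. The key point is that whenever $x - x_n \le 2^{-f(n)}$, the true value $x$ lies in the short interval $[x_n, x_n + 2^{-f(n)})$; these are rational endpoints, so the intervals are effectively given, and their stage of discovery is computably enumerable.

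First I would, for each level $m$, search computably for a stage $n = n(m)$ with $x - x_n \le 2^{-f(n)}$ and $f(n) \ge g(m)$ for an appropriate computable $g$ (this search terminates because $f$ is unbounded along the "regaining" subsequence). This yields a computable sequence of intervals $J_m := [x_{n(m)}, x_{n(m)} + 2^{-f(n(m))})$, each containing $x$, with $\mu(J_m) \le 2^{-g(m)}$. From these I would assemble, for each $k$, an open set $V_k$ of measure \emph{exactly} $2^{-k}$: take the interval $J_{m(k)}$ for $m(k)$ chosen so that $2^{-f(n(m(k)))} \le 2^{-k}$, and then pad it with an additional computably chosen rational interval, disjoint from the relevant part of the construction, of exactly the missing length $2^{-k} - 2^{-f(n(m(k)))}$; since all endpoints are rationals computable uniformly in $k$, the sequence $(V_k)_k$ is a uniformly effective sequence of open sets with $\mu(V_k) = 2^{-k}$, i.e. a Schnorr test. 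Because $x \in J_{m(k)} \subseteq V_k$ for every $k$, the test covers $x$, so $x$ is not Schnorr random.

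The main obstacle, and the reason this is only a "straightforward modification," is bookkeeping the \emph{exactness} of the measure required by a Schnorr test: a Martin-Löf test would let us just use $\bigcup_{m \ge k} J_m$ and bound its measure, but for a Schnorr test we must hit $2^{-k}$ on the nose while staying computable, which forces the padding trick and a little care that the padding interval does not accidentally swallow or miss the point $x$ (choosing the padding far away in $[0,1]$, say in a reserved sub-interval, handles this). A secondary point to check is that the search for $n(m)$ really halts: this is exactly where the regaining hypothesis $x - x_n \le 2^{-n}$ infinitely often is used, together with Proposition~\ref{prop:characterization-reg-app} to boost the decay rate past any prescribed computable bound $g(m)$. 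Once these two points are dispatched the verification that $(V_k)_k$ is a Schnorr test covering $x$ is immediate.
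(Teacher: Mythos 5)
There is a genuine gap at the very first step of your construction: the ``search'' for the stages $n(m)$ cannot be carried out computably. The condition $x - x_n \le 2^{-f(n)}$ is a $\Pi^0_1$ property of $n$ (it asserts $x_m \le x_n + 2^{-f(n)}$ for \emph{all} $m$), not a $\Sigma^0_1$ one: since a left-approximation only ever supplies lower bounds on $x$, there is no finite stage at which you can confirm that $x$ really lies in $[x_n, x_n + 2^{-f(n)})$ --- you can only ever refute it. So your claim that the intervals' ``stage of discovery is computably enumerable'' is false, and the sequence $(J_m)_m$ is not uniformly computable, hence $(V_k)_k$ is not a Schnorr test. The approach cannot be repaired along these lines: if you could computably produce, for each $m$, a stage $n(m)$ with $x - x_{n(m)} \le 2^{-g(m)}$, then $x$ would be a computable real, whereas regainingly approximable numbers need not be computable. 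The padding trick for achieving measure exactly $2^{-k}$ is fine in itself, but it is being applied to objects you cannot compute.

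The paper's proof sidesteps exactly this obstacle by never trying to isolate the good stages: it takes the intervals $S_n = (x_n - 2^{-(n-1)}, x_n + 2^{-(n-1)})$ for \emph{every} $n$, observes that $x$ lies in infinitely many of them (namely at the regaining stages, where $x - x_n \le 2^{-n} < 2^{-(n-1)}$) and that $\sum_n \mu(S_n) = \sum_n 2^{-(n-2)}$ is computable, so that $(S_n)_n$ is a \emph{total Solovay test} covering $x$; the conclusion then follows from the Downey--Griffiths characterization of Schnorr randomness in terms of total Solovay tests. If you insist on exhibiting an explicit Schnorr test, you would in effect have to reprove that characterization, i.e.\ convert a total Solovay test into a Schnorr test --- and that conversion must be organized so that it never needs to decide which of the $S_n$ actually contain $x$.
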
 
\begin{proof}
Let $x$ be regainingly approximable, witnessed by its left-approximation~$(x_n)_n$. Then $x$~is covered by the Solovay test $(S_n)_n$  where $S_n=(x_n-2^{-(n-1)},x_n+2^{-(n-1)})$ for all $n\in\IN$. This test is \textit{total}, that is, the sum of the measures of its components is computable; then, by a result of Downey and Griffiths~\cite[Theorem~2.4]{DG02}, $x$~cannot be Schnorr random.
\end{proof}
In light of this result, it is natural to ask how far down into the hierarchy of benignness notions the class of Schnorr randoms reaches. The answer is given by the following theorem.
\begin{theorem}\label{fkhejkvfjksjkbasdsgad}
There exists a superspeedable number which is Schnorr random.
\end{theorem}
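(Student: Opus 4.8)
The plan is to build $x$ directly as the limit of a computable nested sequence of closed rational intervals $I_0\supseteq I_1\supseteq\cdots$ whose lengths tend to $0$, and to take $x_s:=\min I_s$ as the witnessing left-approximation. Schnorr randomness will be enforced by diagonalising against all total Solovay tests, which by Downey and Griffiths~\cite[Theorem~2.4]{DG02} characterise Schnorr randomness; superspeedability will be enforced by periodically performing \emph{rushes} that, at the $i$-th rush, replace the current interval $[p,q]$ of length $L$ by the tiny stretch $[\,q-\frac{L}{i+1},\,q\,]$ at its right end, thereby dragging $x_s$ forward while cutting the remaining distance to $x$ by the growing factor $i+1$.

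Concretely, fix a uniformly effective listing of all pairs $((S^e_k)_k, m^e)$ consisting of a computable sequence of rational intervals and a candidate modulus; if $\sum_{k\ge m^e(j)}\mu(S^e_k)\le 2^{-j}$ for all $j$ then $(S^e_k)_k$ is a total Solovay test, and every total Solovay test appears in the list with a correct modulus. Requirement $R_e$ asks that $x\in S^e_k$ for only finitely many $k$. We handle $R_0,R_1,\dots$ in order, interleaved with $\mathrm{rush}_1,\mathrm{rush}_2,\dots$. To handle $R_e$ when the current interval has length $\ell$: set $k_0^e:=m^e(j)$ for $j$ with $2^{-j}\le 2^{-e-3}\ell$, and from then on, whenever some $S^e_k$ with $k\ge k_0^e$ is enumerated and meets the current interval, pass to a subinterval avoiding it (a \emph{dodge}); if the total length dodged for $R_e$ ever threatens to exceed $2^{-e-2}\ell$, abandon $R_e$. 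A rush is likewise delayed until it is safe to shrink the interval so drastically, i.e.\ until the (modulus-bounded or capped) untreated tails of all still-active requirements handled so far are far below the intended post-rush length. At every stage we first clear all pending dodges, so that the current interval always stays disjoint from every forbidden interval seen so far.

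Set $x:=\lim_s\min I_s$; this is a left-computable real, since the intervals are nested, their left endpoints are nondecreasing rationals, and each rush shrinks the length by a factor $\ge 2$, forcing lengths to $0$. Every genuine total Solovay test occurs as some $R_e$ with a correct modulus, whose true tails obey the modulus bound so the cap is never reached; hence $R_e$ is never abandoned, $x$ avoids all $S^e_k$ with $k\ge k_0^e$, and $x$ is captured only finitely often, so $x$ is Schnorr random. At the stage at which $\mathrm{rush}_i$ fires we have $x_{s+1}=q-\frac{L}{i+1}$ and, since all later intervals lie inside $[q-\frac{L}{i+1},q]$, also $x\in[q-\frac{L}{i+1},q]$; thus $x-x_{s+1}\le\frac{L}{i+1}$ while $x-x_s\ge x_{s+1}-x_s=L\cdot\frac{i}{i+1}$, giving $\frac{x-x_{s+1}}{x-x_s}\le\frac1i$. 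Since this occurs for every $i$, we obtain $\liminf_n\frac{x-x_{n+1}}{x-x_n}=0$, so by Definition~\ref{characterisierung-superspeedable} the approximation $(x_s)_s$ witnesses that $x$ is superspeedable.

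The delicate point, and the step I expect to be the main obstacle, is making the interleaving consistent: the construction must never stall (at every stage the current interval must not be wholly covered by forbidden intervals, so that a valid subinterval is available), and in particular each rush must eventually fire despite being delayed. This rests on the budgets $2^{-e-2}\ell$ being summable and geometrically small, and on the fact that the untreated tail $\sum_{k\ge K}\mu(S^e_k)$ of a non-abandoned requirement necessarily tends to $0$, so that a rush's waiting phase terminates with those tails far below the drastically smaller post-rush interval; one then maintains by induction the invariant that the forbidden set occupies less than, say, a quarter of the current interval. The usual care concerning non-total candidates and dishonest moduli is needed to ensure that rushes are not delayed forever (unreliable requirements get abandoned) while every genuine test is still defeated (a reliable copy of it is never abandoned).
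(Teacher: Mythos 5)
Your route is genuinely different from the paper's: you attempt a direct nested-interval construction, diagonalising against all total Solovay tests (via Downey and Griffiths~\cite{DG02}) while interleaving ``rushes'' that realise condition~(3) of Definition~\ref{characterisierung-superspeedable}. The rush mechanism itself is sound --- the computation $\frac{x-x_{s+1}}{x-x_s}\le\frac1i$ is correct (modulo making $(x_s)_s$ strictly increasing), and the per-requirement budgets $2^{-e-3}\ell$ do keep the forbidden set from covering the current interval \emph{between} rushes. The paper instead takes a dense simple set $A$ whose complement leaves arbitrarily long blocks of zeros, dilutes $\Omega$ along $A$ to form $\Omega_A$, gets superspeedability for free from Proposition~\ref{asdbsdfjhsadnjdfgnassdfasdfgh}, and proves Schnorr randomness by converting any computable martingale succeeding on $\Omega_A$ with computable speed (Franklin and Stephan~\cite{FS2010}) into a partial computable martingale succeeding on $\Omega$; the density of $A$ guarantees that only $\nicefrac{n}{4}$ bets are lost in the translation. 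That argument sidesteps entirely the synchronisation problem on which your construction hinges.

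And that synchronisation problem is a genuine gap, not ``the usual care.'' Before rush~$i$ can shrink the interval from length $L$ to $\nicefrac{L}{i+1}$, every still-active requirement must be re-certified down to the new scale, i.e.\ the construction must wait until $\sum_{k\ge K}\mu(S^e_k)$ is \emph{verifiably} far below $\nicefrac{L}{i+1}$, which means computing $m^e(j')$ and enumerating $S^e_0,\dots,S^e_{m^e(j')-1}$ for a much larger $j'$. For a candidate whose sequence or modulus is partial, this wait never terminates, so the rush never fires and only finitely many speed-ups occur --- superspeedability fails. If instead you impose a computable timeout and demote laggards, then a genuine but slow test can be caught mid-certification by a rush, after which its uncertified tail may dwarf the new interval; you must then either abandon it (so $x$ may lie in infinitely many of its components and Schnorr randomness fails) or restart it with a larger $k_0^e$, and if this happens infinitely often the test again captures $x$ infinitely often. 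There is no computable bound separating the certification times of the genuine candidates from the divergence of the fake ones, and the theorem of Downey and Griffiths that every left-computable Schnorr random real is high indicates that this tension is essential rather than cosmetic: the settling time of any such $x$ must dominate all computable functions, so the waiting discipline cannot be governed by any computable schedule. Your sketch names this as ``the main obstacle'' but does not overcome it; as written, the construction either stalls or fails one of the two requirements.
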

We point out that the number we construct will have the additional property that it is not partial computably random. 

For an infinite set $A \subseteq \IN$ we write $p_A$ for the 
unique increasing function from $\IN$ to $\IN$ such that $p_A(\IN) = A$. Recall, for instance from Odifreddi~\cite[Section III.3]{odifreddi1992classical}, that a set $A$ is called \emph{dense simple} if it is computably enumerable and $p_{\bar A}$ dominates every computable function. While the complement of a dense simple set must be very thin by definition, 
we claim that there does exist such a set~$A$ whose binary expansion contains arbitrarily long sequences of zeros. 
To see this, recall that a set~$B$ is called {\em maximal} if it is computably enumerable and coinfinite and for any computably enumerable set $C$ with $B\subseteq C$, we must have that $C$ is cofinite or that $C\setminus B$ is finite; in other words, if $B$ only has trivial computably enumerable supersets. Such sets exist, for instance see Odifreddi~\cite[Section~III.4]{odifreddi1992classical}. If we partition the natural numbers into a sequence $(I_n)_n$ of successive intervals of growing length by letting $I_0:=\{0\}$, and $I_{n+1}:=\{\max I_{n}+1,\dots, \max I_{n}+n+2\}$ for all~$n\in \IN$ and fix some maximal set~$B$,  then it is easy to check that we obtain a set~$A$ as required by letting
\[A:=\left\{n\colon\; \exists k,\ell\;\left[ \begin{array}{c}
	\phantom{\;\vee} (n \in I_k \wedge k\in B) \;\vee \\
	(n \text{ is the $\ell$-th smallest element of } I_k \wedge \#(B\ut k)\geq \ell)
	\end{array}\right]\right\} .
\]
\begin{proof}[Proof of Theorem~\ref{fkhejkvfjksjkbasdsgad}]
Let $A$ be a set that is dense simple and whose binary expansion contains arbitrarily long sequences of zeros. Define $\Omega_A$ bitwise via 
			\[\Omega_A(n):=\begin{cases}
				\Omega(m) & \text{if } n=p_A(m),\\
				0 & \text{else};
			\end{cases}\]
for all $n\in\IN$; that is, $\Omega_A$ contains all bits of $\Omega$, but at those places that are elements of $A$; all other bits of $\Omega_A$ are zeros. 
Let $(A[t])_t$ be a computable enumeration of $A$ and for all~$n\in \IN$ write $p_A(n)[t]$ for the $n$-th smallest element in $A[t]$, if it already exists. Then it is easy to see that
			$(\Omega_A[t])_t$ defined bitwise via
			\[\Omega_A(n)[t]:=\begin{cases}
				\Omega(m)[t] & \text{if } n=p_A(m)[t]{\downarrow},\\
				0 & \text{else},
			\end{cases}\]
for all $n\in\IN$, is a left-approximation of $\Omega_A$; namely, the individual bits of $\Omega$ are approximated in a left-computable fashion, and as more and more bits appear in the computable enumeration of $A$, the positions where the bits of $\Omega$ get stored in $\Omega_A$ may move to the left.
			
We claim that $\Omega_A$ is as required by the theorem.
			
\begin{itemize}
\item That $\Omega_A$~is superspeedable follows from Proposition~\ref{asdbsdfjhsadnjdfgnassdfasdfgh}. Namely, recall that $A$~contains arbitrarily long blocks of bits that will permanently maintain value~$0$, that is, for all~$n$ in such a block and all~$t$, we have~$\Omega_A(n)[t]=0$. As a consequence, the left-approximation of $\Omega_A$ described above witnesses superspeedability at the infinitely many stages where for the last time a bit left of one of these blocks changes.
				
\item Note that the fact that $\bar{A}$ is dense immune implies that for every computable increasing function $f \colon \IN \to \IN$, there exists a number $m_f \in \IN$ with $\left|\bar{A} \ut f(n)\right| \leq \frac{n}{4}$ for all~${n \geq m_f}$.
				
\item Let $\alpha, \omega \colon \IN \to \IN$ be defined  via
	\[\begin{array}{l@{\;}c@{\;}l@{\;\;}r@{}c@{}l@{\;}c@{\;}r@{}c@{}l@{}l}
	\alpha(n) &:=& \min\{t \in \IN \colon& A[t] &\ut& (n+1) &=& A &\ut& (n+1) &\},\\
	\omega(n) &:=& \min\{t \in \IN \colon& \Omega[t] &\ut& (n/4) &=& \Omega &\ut& (n/4) &\}\\
\end{array}\]
for all $n\in\IN$. 	We claim that there exists a number $m_1 \in \IN$ such that  for every $n \geq m_1$ we have
				$\alpha(n) \leq \omega(n)$. This is clear since the Kolmogorov complexity of computably enumerable sets grows logarithmically in the length of its initial segments, while that of~$\Omega$ grows linearly.
				
		\item Let $\theta \colon \IN \to \IN$ be defined  via
		\[\theta(n) := \min\{t \in \IN \colon\; \Omega_A[t] \ut n = \Omega_A \ut n \}\]
%		\[\begin{array}{l@{\;}c@{\;}l@{\;\;}r@{}c@{}l@{\;}c@{\;}r@{}c@{}l@{}l}
%
%			\theta(n) &:=& \min\{t \in \IN \colon& \Omega_A[t] &\ut& n &=& \Omega_A &\ut& n &\}
%		\end{array}\]
		for all $n\in\IN$. 	We claim that there exists a number $m_2 \in \IN$ such that  for every $n \geq m_2$ we have
		$\omega(n) \leq \theta(n)$. To see this, fix some $n \geq m_\mathrm{id}$ and such that $\Omega_A\ut n$ contains at least one~$1$; write $t_n$ for~$\theta(n)$. 
		
		First, it is easy to see that if we let $\ell_n$ denote the maximal $\ell$ such that $(\Omega_A[t_n]\ut n)(\ell)=1$, then we must already have $A[t_n]\ut \ell_n = A\ut \ell_n$. But then, since we chose $n \geq m_\mathrm{id}$,  the sets~$A$ and~$A[t_n]$ contain exactly the same at least~$\ell_n - \nicefrac{n}{4}$ elements less than~$\ell_n$. Consequently,  we must have $\Omega[t_n] \ut (\ell_n-\nicefrac{n}{4}) = \Omega \ut (\ell_n-\nicefrac{n}{4})$. If we choose $m_2 \geq m_\mathrm{id}$ such that for all~$n \geq m_2$ we have $\ell_n \geq \nicefrac{n}2$, we are done. 
		To see that such an~$m_2$ exists, we argue as follows: For a given~$n\ge m_\mathrm{id}$,  $\Omega_A\ut n$ contains at least the first $k \ge \nicefrac{3}{4} \cdot n$ bits of $\Omega$; and thus if the last $1$ in $\Omega_A\ut n$ occurs before position~$\nicefrac{n}2$, then the last~$\nicefrac{k}{3}$ bits of $\Omega$ must be~$0$. But this can occur only for finitely many~$k$ as~$\Omega$ is Martin-L\"of random.
		
		\item
		Now we show that $\Omega_A$ is not partial computably random. We recursively define the partial martingale $d \colon {\subseteq} \SigmaS \to \IQ_{\geq 0}$ as follows: Let $d(\lambda) := 1$, and suppose that $d(\sigma)$ is defined for some $\sigma \in \SigmaS$. Let $t_{\sigma} \in \IN$ be the earliest stage with $\Omega_A[t_{\sigma}] \ut \left|\sigma\right| = \sigma$, if it exists. 
%		If such a stage does not exist, then both $d(\sigma0)$ and $d(\sigma1)$ are undefined. 
Then define $d(\sigma0)$ and $d(\sigma1)$ via
			\[d(\sigma0) := \begin{cases}
				\frac{3}{2} \cdot d(\sigma) &\text{if $\left|\sigma\right| \notin A[t_{\sigma}] $,} \\
				d(\sigma) &\text{otherwise,}
			\end{cases}
			\qquad
			d(\sigma1) := \begin{cases}
				\frac{1}{2} \cdot d(\sigma) &\text{if $\left|\sigma\right| \notin A[t_{\sigma}] $,} \\
				d(\sigma) &\text{otherwise.}
			\end{cases}\]
		It is clear that $d$ is a partial computable martingale. 
		We claim that $d$ succeeds on~$\Omega_A$. To see this, let $\ell \geq \max(m_\mathrm{id}, m_1, m_2)$ and assume that $d$~receives input $\sigma := \Omega_A \ut \ell$. Let $t_\sigma \in \IN$ be the first stage such that $\Omega_A [t_\sigma] \ut \ell = \sigma$. Then, by choice of~$\ell$ and~$\sigma$, we have $\alpha(\ell) \leq \omega(\ell) \leq \theta(\ell) =t_{\sigma}$, thus $A[t_\sigma] \ut (\ell + 1) = A \ut (\ell + 1)$. Thus,  $\ell \not\in A[t_{\sigma}]$ if and only if $\ell \not\in A$, and by construction, for each of the infinitely many $\ell \not\in A$, we have $d(\Omega_A\ut(\ell+1)) = \frac{3}{2} \cdot d(\sigma)$. On the other hand, for all other $\ell$, we have $d(\Omega_A\ut(\ell+1)) = d(\sigma)$. Thus, $\lim_{n\to\infty} d(\Omega_A\ut n) = \infty$, and $\Omega_A$ is not partial computably random.
				
				\item It remains to show that $\Omega_A$ is Schnorr random. For the sake of a contradiction, assume this is not the case; then, according to a result of Franklin and Stephan~\cite{FS2010}, there exists a computable martingale $d \colon \SigmaS \to \IQ_{\geq 0}$ and a computable increasing function $f \colon \IN \to \IN$ with $d(\Omega_A \ut f(n)) \geq 2^n$ for infinitely many $n \in \IN$. W.l.o.g.\ we can assume $d(\lambda) = 1$.
				
				Let $m:=\max(m_f, m_1)$ and define the partial computable martingale $d' \colon {\subseteq} \SigmaS \to \IQ_{\geq 0}$ recursively as follows: Let $d'(\sigma) := 1$ for each $\sigma$ with $|\sigma|\leq m$. Now suppose that $d'(\sigma)$ is defined for some~$\sigma \in \SigmaS$, write $\ell := \left|\sigma\right|$, and let $t_{\sigma} \in \IN$ be the earliest stage with $\Omega[t_{\sigma}] \ut \ell = \sigma$, if it exists. To define how $d'$ bets on the next bit, imitate the betting of $d$ on the~$p_{A}[t_\sigma](\ell+1)$-th bit of $\Omega_A[t_\sigma]$. It is clear that $d'$ is computable.
								
				We claim that $d'$ succeeds on $\Omega$.	To see this, fix any~$\ell \geq m$ and assume that $d'$~receives input~$\sigma := \Omega \ut \ell$.
				Let $t_\sigma \in \IN$ be the first stage such that $\Omega [t_\sigma] \ut \ell = \sigma$. By choice of $m$, this implies $p_{A}[t_\sigma](\ell+1) = p_A(\ell+1)$. By construction, $d'$ bets in the same way on $\Omega \ut (\ell+1)$ as $d$ does on $\Omega_A \ut p_A(\ell+1)$.
				
				Define the function $g \colon \IN \to \IN$ by $g(n) := f(n) - \left|\bar{A} \ut f(n)\right|$, and pick any of the, by assumption, infinitely many~$n$ with $d(\Omega_A \ut f(n)) \geq 2^n$.
				Recall that $\Omega_A \ut f(n)$ contains at most $\nicefrac{n}{4}$ bits which belong to $\bar{A}$ and each of them can at most double the starting capital. 
				This implies that $d'$, which omits exactly these bets but imitates all the others, must still  at least achieve capital $2^{\nicefrac34 \cdot n}$ on the initial segment $\Omega \ut g(n)$. This contradicts the well-known fact that $\Omega$ is partial computably random.
				\qedhere
			\end{itemize}
		\end{proof}	
		
		\section{Future research}\label{finalsection}
		
		We finish the article by highlighting possible future research directions:
		
		The first two open questions concern the positions marked $\scalebox{0.55}{\begin{tikzpicture}[unclear/.style = {draw=black,thick,circle, densely dashed, inner sep=0.12cm,node contents={}}]
				\node at (0,0) [unclear];
		\end{tikzpicture}}_{\,1}$ and $\scalebox{0.55}{\begin{tikzpicture}[unclear/.style = {draw=black,thick,circle, densely dashed, inner sep=0.12cm,node contents={}}]
		\node at (0,0) [unclear];
	\end{tikzpicture}}_{\,2}$ in Figure~\ref{sdfjsdfjdfmndsnmvbmnsdjkbf}; in both cases it is unknown whether any such numbers can exist. Note that proving the existence of $\scalebox{0.55}{\begin{tikzpicture}[unclear/.style = {draw=black,thick,circle, densely dashed, inner sep=0.12cm,node contents={}}]
	\node at (0,0) [unclear];
\end{tikzpicture}}_{\,1}$ would give a negative answer to the open question posed by H\"olzl and Janicki~\cite{HJ2023b} whether the left-computable numbers are covered by the union of the Martin-L\"of randoms with the speedable and the nearly computable numbers; in particular, it would provide an alternative way to obtain a counterexample to the question of Merkle and Titov discussed in the introduction. 
Concerning $\scalebox{0.55}{\begin{tikzpicture}[unclear/.style = {draw=black,thick,circle, densely dashed, inner sep=0.12cm,node contents={}}]
		\node at (0,0) [unclear];
\end{tikzpicture}}_{\,2}$, one might be tempted to believe that such a number could be constructed by an argument similar to that used to prove Theorem~\ref{fkhejkvfjksjkbasdsgad} but using a maximal computably enumerable set~$A$; such a set would still be dense simple by a result of Martin~\cite{martin1963theorem}, but it would only contain isolated zeros. However, it can be shown that an $\Omega_A$ constructed in this way would still end up being superspeedable.

			Next, besides Schnorr randomness, there are numerous other randomness notions weaker than Martin-L\"of randomness, such as computable randomness or weak $s$-randomness (see, for instance, Downey and Hirschfeldt~\cite[Definitions 13.5.6 and 13.5.8]{DH2010}). It is natural to ask which of them are compatible with which of the notions of benign approximability discussed in this article.
			
			Finally, in this field, many relevant properties of the involved objects are not computable; one might ask {\em how far}
			from computable they are. One framework in which questions of this type can be studied is the Weihrauch degrees, a tool to gauge the computational difficulty of mathematical tasks by thinking of them as black boxes that are given  {\em instances} of a problem and that have to find one of its admissible {\em solutions}. This model then allows comparing the ``computational power'' of such black boxes with each other (for more details see, for instance, the survey by Brattka, Gherardi, and Pauly~\cite{BGP21}). In the context of this article, we could for example ask for the Weihrauch degrees of the following non-computable tasks, with many variants imaginable:
			\begin{itemize}
				\item Given an approximation that witnesses speedability of some number as well as a desired
				constant, determine infinitely many stages at which the speed quotients of the given approximation
				beat the constant.				
%				\item Similarly for superspeedable numbers, but in addition we might demand some information about how fast the limit superior of the speed quotients approaches $1$.
				\item For a speedable number, given an approximation to it as well as a desired speed constant~$\rho$, determine another approximation of that number which achieves speed constant~$\rho$ in the limit superior.
				\item For an approximation witnessing regaining approximability of some number, determine the sequence of $n$'s at which the approximation ``catches up.''
			\end{itemize}	
		\goodbreak	

		\bibliography{bib2doi}

\end{document}